\newtheorem{theorem}{Theorem}[section]
\newtheorem{lemma}[theorem]{Lemma}
\theoremstyle{definition}
\newtheorem{definition}[theorem]{Definition}
\newtheorem{corollary}[theorem]{Corollary}
\newtheorem{observation}[theorem]{Observation}
\theoremstyle{remark}
\numberwithin{equation}{section}
\begin{document}
\title[On the Blaschke rolling disk theorem] {\sc On the Blaschke rolling disk theorem}
\author{Jos\'e Ayala}
\address{Facultad de Ciencias, Universidad Arturo Prat, Iquique, Chile} 
\address{School of Mathematics and Statistics, University of Melbourne
              Parkville, VIC 3010 Australia}
              \email{jayalhoff@gmail.com}
\subjclass[2000]{52A10, 53C42, 53A04}
\keywords{Blaschke rolling theorem, bounded curvature, convex body}
\baselineskip=20 true pt
\maketitle \baselineskip=1.10\normalbaselineskip

\begin{abstract} 
\baselineskip=20 true pt
\maketitle \baselineskip=1.15\normalbaselineskip

The Blaschke rolling disk theorem is a classical inclusion principle in differential geometry. This states that a planar convex domain whose boundary is a curve of class $C^2$ with (signed) curvature not exceeding a positive constant $\kappa$ is such that for each point on its boundary there exists a disk of radius ${1}/{\kappa}$ tangent to the boundary included in the closure of the domain. 

We describe geometric conditions relying exclusively on curvature and independent of any kind of convexity that allows us to give necessary and sufficient conditions for the existence of rolling disks for planar domains that are not necessarily convex. We finish by presenting an algorithm leading to a decomposition of any planar domain into a finite number of maximal rolling regions.
\end{abstract} 

\section{Introduction} 

The Blaschke rolling disk theorem is an inclusion principle first proven in 1916 by Wilhelm Blaschke \cite{blaschke}. It states that a convex domain $\mathcal K\subset \mathbb R^2$ whose boundary $\partial \mathcal K$ is a curve of class $C^2$ with curvature not exceeding a constant $\kappa>0$ is such that for each point in $\partial \mathcal K$ there exists a disk of radius ${1}/{\kappa}$ tangent to $\partial \mathcal K$ included in the closure of $\mathcal K$. 

The convexity of $\mathcal K$ implies that the curvature of $\partial \mathcal K$ is signed. That is,  the curvature of $\partial \mathcal K$ may be nonnegative or nonpositive but not both. After dropping the convexity of $\mathcal K$, the curvature of $\partial \mathcal K$ may change in sign. Accordingly, we assume that the absolute curvature of $\partial \mathcal K$ does not exceeds $\kappa>0$. Note the Blaschke rolling disk theorem is a local to global result since a local property as the bound on absolute curvature on $\partial \mathcal K$ allow us to conclude about a global property of $\mathcal K$ as the existence of a radius $1/\kappa$ disk tangent at each point at $\partial \mathcal K$ included in the closure of $\mathcal K$. 
 
By describing geometric conditions derived from the curvature of $\partial \mathcal K$ we find necessary and sufficient conditions for the existence of rolling disks for general domains $\mathcal K\subset \mathbb R^2$. Definition \ref{dfnarc} is the key feature to identify the existence of arcs in $\partial \mathcal K$ whose end points, called essential and inessential terminals, guarantee the existence of several geometric objects. In particular, the existence of parallel tangents, who are key to characterise convexity for plane curves.

The main contributions of this paper are: Theorem \ref{halfdisk} which is a `half version' of the Pestov-Ionin theorem \cite{pestov} in which we prove the existence of a half disk for curves that are not close. Theorem \ref{nonconvex} where we characterise convexity and non convexity of a domain $\mathcal K\subset \mathbb R^2$ in terms of the type of terminals its boundary $\partial \mathcal K$ admits. Theorem \ref{main} gives necessary and sufficient conditions for a planar domain $\mathcal K$ to be internal rolling, external rolling or both simultaneously. In Theorem \ref{dcp} we provide an algorithm leading to a decomposition of any planar domain into a finite number of maximal rolling regions. As a consequence of Theorem \ref{main}, we obtain an updated version for the Blaschke rolling disk theorem. The methods here presented allow natural adaptations in higher dimensions.

\section{The turning condition} \label{arc}

We consider a loop $\sigma: S^1\to\mathbb R^2$ to be the homeomorphic image of the circle. The loops are of class $C^1$, regular, arc length parametrised, traversed clockwise with period equals to its length. A loop is called {$\kappa$-constrained} if is of class piecewise $C^2$ and has {bounded absolute curvature} i.e. $||\sigma''||\leq \kappa$, when defined, with $r:={1}/{\kappa}>0$ a constant. Therefore, $\kappa$-constrained loops have almost everywhere well defined curvature with respects to its arc length parameter. We often refer to a loop instead to a $\kappa$-constrained loop. 


We endow the space of $\kappa$-constrained loops with the $C^1$ metric. The ambient space of the loops is the euclidean plane with the topology induced by the euclidean metric.  The interior, boundary, closure and complement of a set $\mathcal K\subset \mathbb R^2$ are denoted by $int(\mathcal K)$, $\partial \mathcal K$, $cl (\mathcal K)$ and $\mathcal K^c$ respectively. A planar domain is considered to be an open connected set in $\mathbb R^2$. We regard a loop as either the map $\sigma: S^1\to\mathbb R^2$ or its image  when no confusion arises. 

The Jordan curve theorem asserts that the complement of a loop in $\mathbb R^2$ corresponds to two regions each of which has boundary the loop. One region is a topological disk enclosed by the loop and the other is unbounded \cite{docarmo}.

\begin{definition}\label{dfnrolling}
Let $T(t)$ and $N(t)$ be a continuous choice of tangent and normal line at $\sigma(t)$, $t \in S^1$. A domain $\mathcal K\subset \mathbb R^2$ is called {internal rolling} if its boundary $\partial \mathcal K=\sigma$ is a $\kappa$-constrained loop such that for all $t\in S^1$ there exist disk $D(t)$ of radius $r={1}/{\kappa}$ tangent to $T(t)$ with $D(t)\subset cl({\mathcal K})$. A domain $\mathcal K\subset \mathbb R^2$ is called {external rolling} if its boundary $\partial \mathcal K=\sigma$ is a $\kappa$-constrained loop such that for all $t\in S^1$ there exist disk $E(t)$ of radius $r={1}/{\kappa}$ tangent to $T(t)$ with $E(t)\subset cl({\mathcal K}^c)$. A domain $\mathcal K\subset \mathbb R^2$ is called {rolling} if it is internal and external rolling, see Fig. \ref{exroll}.
 \end{definition}

\begin{figure}[h]
	\centering
	\includegraphics[width=1\textwidth,angle=0]{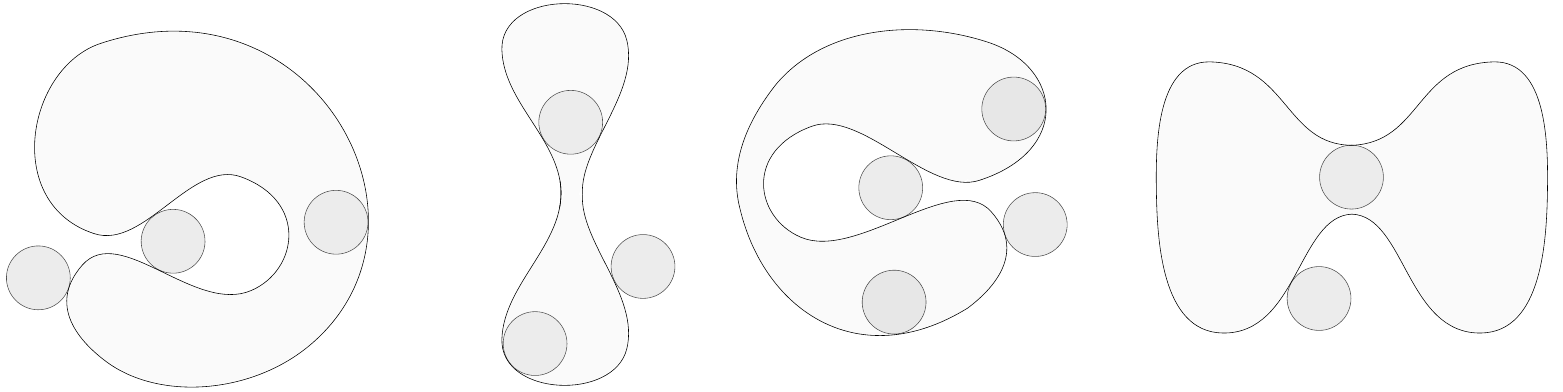}
\caption{Examples of planar domains and tangent disks of radius $r=1/\kappa$. From left to right.  A domain that is internal rolling but not external rolling. A domain that is external rolling but not internal rolling. A domain that is neither internal nor external rolling. A domain that is rolling. Clearly, none of these domains is convex.}
	\label{exroll}
\end{figure}

Next we rewrite the well known Blaschke rolling disk theorem whose proof in its original form can be found in  \cite{blaschke}.

\vspace{.3cm}
\noindent {\bf Theorem} A convex domain $\mathcal K\subset \mathbb R^2$ with boundary of class $C^2$ and curvature not exceeding a positive constant is internal rolling. 
\vspace{.3cm}

The definition below is based on the classification of the homotopy classes of $\kappa$-constrained curves \cite{papere}. Roughly speaking $\kappa$-constrained curves satisfy the same properties as the loops but they are the image of a nondegenerate close interval. In Observation \ref{connect} we explain the connections between the results in \cite{papere} and Definition \ref{dfnarc}.

\begin{definition}\label{dfnarc}(Turning condition). Let $\sigma: S^1\to\mathbb R^2$ be a $\kappa$-constrained loop. Consider $\sigma: [t_1,t_2]\subset S^1\to\mathbb R^2$. Let $D_1, D_2$ be radius $r={1}/{\kappa}$ disks with $\sigma(t_1),\sigma(t_2) \in \partial D_1\cap \partial D_2$.
\noindent A short curve satisfies:

\begin{itemize}
\item $||\sigma(t_1)-\sigma(t_2)||\leq2r$;
\item $\sigma([t_1,t_2])\subset D_1\cap D_2 $.
\end{itemize}

\noindent A long curve satisfies:

\begin{itemize}
\item $||\sigma(t_1)-\sigma(t_2)||< 2r$;
\item $\sigma([t_1,t_2])$ has a point in $ (D_1\cap D_2)^c$. 
\end{itemize}

A curve is called simple if $||\sigma(t_1)-\sigma(t_2)||\geq 2r$. By definition an arc of a circle of radius $r$ is short, simple but never long. We refer to the points $\sigma(t_1),\sigma(t_2)$ as terminals. Terminals are called essential if they are not terminals of a short or a simple curve. Terminals that are not essential are called inessential, see Fig \ref{exarcs}. A curve is said to be essential or inessential depending on its terminals. Note that a pair of terminals separate a loop intro two complementary curves.
\end{definition}

\begin{figure}[h]
	\centering
	\includegraphics[width=1\textwidth,angle=0]{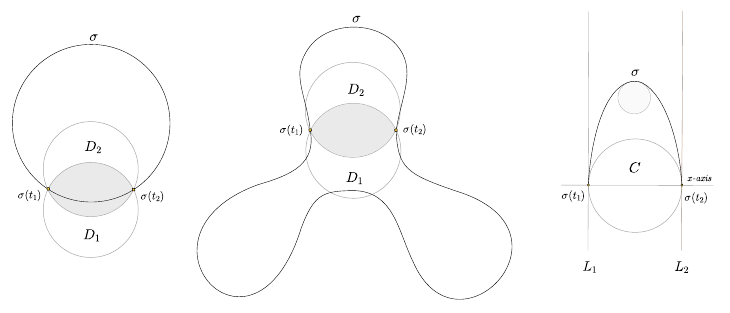}
\caption{Left: a $\kappa$-constrained loop with a pair of inessential terminals is decomposed into two curves one long and one short. Short curves are always included in $D_1\cap D_2$. Centre: a $\kappa$-constrained loop with a pair of essential terminals is decomposed into two long curves. Long curves always have a point in $ (D_1\cap D_2)^c$. Right: a $\kappa$-constrained curve whose terminals are distant apart less or equal to $2r$ cannot make a u-turn while defined in an open band of width $2r$.}
	\label{exarcs}
\end{figure}


Before we explain Definition \ref{dfnarc} we would like to highlight the importance of the following result from which many results derive. In its original form, this establishes that a $\kappa$-constrained curve whose terminals are distant apart less than $2r$ cannot make a u-turn while defined in the open band of width $2r$, see Fig. \ref{exarcs} right. We present an updated version of this result for the case when the terminals are distant apart less or equal to $2r$. The proof remains identical to the one presented in  \cite{papere}. 

\begin{lemma}\label{r1}(Lemma 3.1 \cite{papere}). A $\kappa$-constrained curve $\sigma: [t_1,t_2] \rightarrow {\mathcal B}$ where,
$${\mathcal B}=\{(x,y)\in{\mathbb R}^2\,|\, -r< x< r\,\,,\,\,y\geq 0 \}$$
cannot satisfy both:
\begin{itemize}
\item $\sigma(t_1),\sigma(t_2)$ are points on the $x$-axis;
\item If $C$ is a radius $r$ circle with centre on the nonpositive $y$-axis and $\sigma(t_1),\sigma(t_2)\in C$, then some point in $\sigma([t_1,t_2])$ lies above $C$.
\end{itemize}
\end{lemma}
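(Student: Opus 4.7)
The strategy is proof by contradiction. Suppose $\sigma:[t_1,t_2]\to\mathcal{B}$ satisfies both bullet points. Normalize coordinates so that $\sigma(t_1)=(-a,0)$, $\sigma(t_2)=(a,0)$ with $0\leq a\leq r$, and set $h=\sqrt{r^2-a^2}$, so $C$ is the circle of radius $r$ centered at $(0,-h)$ and its upper arc $\mathcal{A}:=C\cap\{y\geq 0\}$ joins $\sigma(t_1)$ to $\sigma(t_2)$ inside $\mathcal{B}$, bulging up to height $r-h\leq r$. By continuity and the existence of a point of $\sigma$ strictly above $\mathcal{A}$, a first-crossing argument produces $s_1<s_2$ in $[t_1,t_2]$ with $\sigma(s_1),\sigma(s_2)\in\mathcal{A}$ and $\sigma((s_1,s_2))$ strictly above $\mathcal{A}$.

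The heart of the argument is a curvature comparison at $\sigma(s_1)$. Since $\sigma$ has absolute curvature at most $1/r$ while $\mathcal{A}$ has curvature exactly $1/r$ bending toward $(0,-h)$, if the unit tangents of $\sigma$ and $\mathcal{A}$ at $\sigma(s_1)$ coincided, the osculating comparison would place $\sigma$ on the concave side of $\mathcal{A}$, i.e.\ below $\mathcal{A}$, in a one-sided neighbourhood of $s_1$, contradicting the choice of $s_1$ as the left endpoint of a sub-arc lying above $\mathcal{A}$. Hence the tangent of $\sigma$ at $\sigma(s_1)$ crosses $\mathcal{A}$ transversally, pointing outward from the disk bounded by $C$; by symmetry the tangent at $\sigma(s_2)$ points inward. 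Let $\theta(s)$ denote the tangent angle of $\sigma$, so that the curvature bound reads $|\theta'(s)|\leq 1/r$ almost everywhere.

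The contradiction comes from the strip width. The outward/inward transverse crossings at $s_1,s_2$ impose a definite total turning $\Delta\theta$ of $\sigma$ between $s_1$ and $s_2$, and the bound $|\theta'|\leq 1/r$ then forces a minimum arc length of order $r\,\Delta\theta$. A Dubins-type extremal comparison, together with the hypothesis that $\sigma([s_1,s_2])$ remains above $\mathcal{A}$, shows that the horizontal excursion of this sub-arc from its chord on $\mathcal{A}$ is at least that of an arc of a radius-$r$ circle realising the same prescribed tangents at the endpoints. A direct computation against the strip $-r<x<r$ of width exactly $2r$ shows that this excursion forces some point of $\sigma([s_1,s_2])$ to satisfy $|x|\geq r$, contradicting $\sigma\subset\mathcal{B}$.

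The main obstacle is making the last quantitative comparison precise: the critical case is when $\sigma([s_1,s_2])$ nearly coincides with an arc of a radius-$r$ circle turning by almost $\pi$, which would saturate the strip width $2r$; the hypothesis that $\sigma$ lies \emph{strictly} above $\mathcal{A}$ (rather than on it) and that $\mathcal{A}$ itself already realises curvature $1/r$ is precisely what supplies the missing room, and it is here that the equality between the strip width $2r$ and the curvature radius $1/\kappa$ is essential. This is also the place where allowing the weak inequality $\|\sigma(t_1)-\sigma(t_2)\|\leq 2r$ (rather than strict $<2r$) does not affect the argument, explaining why the proof of \cite{papere} carries over verbatim.
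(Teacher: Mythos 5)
The paper itself offers no proof of this lemma --- it defers entirely to Lemma 3.1 of \cite{papere} with the remark that ``the proof remains identical'' --- so there is nothing in-paper to compare your route against; your proposal has to stand on its own, and it does not. The first problem is that your curvature comparison at $\sigma(s_1)$ is backwards. If $\sigma$ and $\mathcal{A}$ share a tangent line at a point, then since $\mathcal{A}$ has signed curvature exactly $1/r$ towards the centre of $C$ while $\sigma$ has signed curvature at most $1/r$ towards that centre, the second-order osculating comparison places $\sigma$ weakly \emph{farther} from the centre, i.e.\ on or \emph{above} $\mathcal{A}$ near the touching point --- not on the concave side as you claim. A tangential (non-transversal) touch at $s_1$ is therefore perfectly consistent with $\sigma((s_1,s_2))$ lying above $\mathcal{A}$, so neither the transversality nor the ``definite total turning $\Delta\theta$'' is established; indeed two consecutive crossings of $\mathcal{A}$ can occur at arbitrarily small angles, so there is no uniform lower bound on the turning between them.

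The second and fatal problem is that the quantitative core is not only asserted without proof but is false in the localized form you use. By restricting to the sub-arc $\sigma([s_1,s_2])$ between two crossings of $\mathcal{A}$, you discard exactly the hypothesis that carries the content of the lemma, namely that $\sigma$ must descend to the $x$-axis on both sides while remaining in the strip. Concretely: let $\sigma([s_1,s_2])$ be the arc of the radius-$r$ circle centred at $(0,-h+\delta)$, $\delta>0$ small, cut off between its two intersection points with $\mathcal{A}$. This arc has curvature exactly $1/r$, lies strictly above $\mathcal{A}$ with transversal crossings at its endpoints, is itself the radius-$r$ extremal realising its own endpoint tangents --- and it stays in an arbitrarily small neighbourhood of the apex of $\mathcal{A}$, nowhere near $|x|=r$. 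Hence no comparison confined to the excursion above $\mathcal{A}$ can force a point with $|x|\geq r$, and your claimed contradiction evaporates. A correct argument must track the whole journey from the $x$-axis, over $C$, and back down to the $x$-axis inside the band of width $2r$; that global descent, not the local bump above $\mathcal{A}$, is where the width constraint bites.
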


Two $\kappa$-constrained curves with fixed common terminals are said to be $\kappa$-constrained homotopic if at each stage of the deformation the intermediate curves are $\kappa$-constrained curves while keeping the terminals fixed. Next we explain our main results in \cite{papere} and put Definition \ref{dfnarc} into context. 

\begin{observation} \label{connect} Let $D_1,D_2$ be radius $r$ disks. Let $\sigma: [t_1,t_2]\to\mathbb R^2$ be a $\kappa$-constrained curve with (fixed) terminals satisfying $0<||\sigma(t_1)-\sigma(t_2)||<2r$. Then, there are exactly two connected components of $\kappa$-constrained curves connecting these terminals. One is an isotopy class of short curves. All the curves in this class are confined to be in $D_1\cap D_2$ where $\sigma(t_1),\sigma(t_2) \in \partial D_1\cap \partial D_2$, see Theorem 4.21 in \cite{papere}. The other, is a homotopy class of long curves connecting the same terminals. These curves are such that they are $\kappa$-constrained homotopic to a curve with a point in $( D_1\cap D_2)^c$, see Theorem 4.22 in \cite{papere}. 

Accordingly for fixed terminals, a curve cannot be $\kappa$-constrained homotopic to a short and a long arc simultaneously, and therefore it cannot be short and long simultaneously. 

In addition, if $||\sigma(t_1)-\sigma(t_2)||\geq 2r$ there is a single connected component of simple curves, similarly as for homotopies between continuous plane paths. If $\sigma(t_1)=\sigma(t_2)$ there is a single connected component of closed curves allowing a kink. The bound on absolute curvature is so rigid that a curve with terminals $\sigma(t_1),\sigma(t_2) \in \partial D_1\cap \partial D_2$ being entirely defined in $D_1\cup D_2$ cannot have a point in $int(D_1\cup D_2)\setminus cl(D_1\cap D_2)$.
\end{observation}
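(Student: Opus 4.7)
The plan is to reduce every claim to the turning condition of Lemma \ref{r1}. Fix terminals $p_1 = \sigma(t_1)$, $p_2 = \sigma(t_2)$ with $0 < \|p_1-p_2\| < 2r$, let $D_1, D_2$ be the two radius-$r$ disks through them, and set $L := D_1 \cap D_2$. The first goal is to rule out that a single $\kappa$-constrained curve realises both behaviours in Definition \ref{dfnarc}. I would suppose for contradiction that a deformation exists between a curve lying in $L$ and one leaving $L$. At any point where a continuous deformation first touches $\partial L$ from inside, set up local coordinates so that a short arc of $\partial L$ is a radius-$r$ circle with centre on the negative $y$-axis; a curve crossing $\partial L$ transversally must leave and re-enter $L$, producing exactly the u-turn configuration ruled out by Lemma \ref{r1}. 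This yields the short/long dichotomy and shows that the two classes cannot coincide.

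Next I would show that each class is connected. For short curves, $L$ is star-shaped and built from radius-$r$ arcs, which allows an explicit homotopy by reparametrisation and arc-by-arc retraction onto a canonical short model while preserving the $\kappa$-bound; this recovers Theorem 4.21 of \cite{papere}. For long curves, I would first deform any excursion outside $L$ to a piecewise circular standard model that passes through one fixed point of $(D_1 \cap D_2)^c$, then apply the short-curve argument to the portions inside $L$; this recovers Theorem 4.22 of \cite{papere}. The cases $\|p_1-p_2\| \geq 2r$ and $\sigma(t_1)=\sigma(t_2)$ are handled separately: in the former the lens $L$ degenerates and does not constrain, so a single simple-curve component arises directly from planar path-homotopy; in the latter closed loops with kinks give a single connected class.

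The most delicate step will be the rigidity assertion: a curve whose terminals lie on $\partial D_1 \cap \partial D_2$ and which is contained in $D_1 \cup D_2$ cannot enter $\mathrm{int}(D_1\cup D_2)\setminus \mathrm{cl}(L)$. I plan to argue that any such excursion produces a sub-curve with both endpoints on one boundary arc of $L$ and an intermediate point inside the corresponding lune $D_i \setminus L$. Framing this sub-curve in a band of width at most $2r$ orthogonal to its chord makes the opposite arc of $\partial(D_i \setminus L)$ play the role of the forbidden circle $C$ in Lemma \ref{r1}, delivering the contradiction. The main obstacle is arranging the local coordinates and verifying that the sub-curve is genuinely confined to the required $2r$-band before invoking the lemma, since the lune geometry is asymmetric once $\|p_1 - p_2\|$ is close to $2r$.
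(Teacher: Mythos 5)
The paper does not actually prove this statement: it is labelled an \emph{observation} precisely because its substantive content --- that there are exactly two connected components, one consisting of short curves trapped in $D_1\cap D_2$ and one of long curves --- is imported wholesale from Theorems 4.21 and 4.22 of \cite{papere}, which occupy a separate paper. Your proposal instead tries to re-derive those theorems from Lemma \ref{r1} alone, and as a proof it has genuine gaps. The connectivity of each class is the hard part of the cited theorems, and it is not obtained by an ``arc-by-arc retraction onto a canonical model while preserving the $\kappa$-bound'': retractions and interpolations generically destroy a curvature bound, and producing a $C^1$-continuous family of curves each satisfying $\|\sigma''\|\le\kappa$ is exactly the technical content you would have to supply. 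For the long class you would additionally need to show that all long curves --- zigzagging, spiralling, winding differently around the terminals, cf.\ Observation \ref{longarc} --- lie in a single component; your sketch does not address this at all.

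The dichotomy argument also does not work as stated. The bounding arcs of the lens $L=D_1\cap D_2$ are themselves radius-$r$ circular arcs, so a short curve may legitimately touch, or even coincide with, a sub-arc of $\partial L$; ``first touching $\partial L$ from inside'' is therefore not a contradiction. Nor is a small excursion outside $D_i$ with both endpoints on $\partial D_i$ forbidden by Lemma \ref{r1}: a straight segment tangent to $\partial D_i$ already lies outside the disk near the tangency point with zero curvature, so a local crossing of $\partial L$ does not produce the u-turn configuration of the lemma. The obstruction must come from the global constraint that the terminals of the entire curve sit at the two corner points $\partial D_1\cap\partial D_2$ of the lens, which is how the argument is organised in \cite{papere}; your purely local crossing analysis discards that information. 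The same objection applies to the final rigidity claim, where you yourself flag that the confinement of the sub-curve to a width-$2r$ band --- the hypothesis needed to invoke Lemma \ref{r1} --- is unverified. In short, the route is reasonable in spirit but each of the three main steps currently rests on an unproved assertion that is essentially equivalent to the result being claimed.
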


\begin{observation} \label{longarc}
Long curves may zigzag in between terminals or spiral around them, see Fig. \ref{spiralex}. Short curves may zigzag in between terminals but clearly never spiral. If a long curve zigzag in between terminals we split it into several adjacent long curves by considering the intersection of the curve and the line joining the terminals. For example, suppose a long curve intersects the line joining the terminals at a point other than the terminals say at $\sigma(t^*)$, then it is splitted into two adjacent long curves $\sigma([t_1,t^*])$ and  $\sigma([t^*,t_2])$, see Fig. \ref{spiralex}. 

In addition, a long curve that spiral around its terminals always admit a point $\sigma(t^*)$ in the line connecting its terminals such that $\sigma([t_1,t_2])\subset\sigma([t_1,t^*])$. 
\end{observation}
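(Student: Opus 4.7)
The plan is to treat the three assertions of the observation separately. For the contrast between short and long curves in the first claim, I would invoke Observation \ref{connect}: a short curve is confined to the lens $D_1\cap D_2$, which is a closed topological disk whose boundary consists of two circular arcs meeting at the terminals. Every arc inside this lens joining the two corner points is isotopic rel endpoints to the chord $\overline{\sigma(t_1)\sigma(t_2)}$, so it carries zero winding about either terminal and cannot spiral. Long curves, by contrast, are permitted by Observation \ref{connect} to visit $(D_1\cap D_2)^c$, and so they have the topological freedom either to zigzag across the chord or to wind around a terminal.

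For the zigzag split of the second claim, assume $\sigma$ is long and that $\sigma(t^*)$ with $t^*\in(t_1,t_2)$ lies on the chord $\overline{\sigma(t_1)\sigma(t_2)}$. I would argue by contradiction that each subarc $\sigma([t_1,t^*])$ and $\sigma([t^*,t_2])$ is long. If $\sigma([t_1,t^*])$ were short, it would sit inside a lens $D_1'\cap D_2'$ over the terminals $\sigma(t_1),\sigma(t^*)$; a rigid motion places this lens inside the band $\mathcal{B}$ of Lemma \ref{r1}, and the lemma prevents the arc from reaching the opposite side of the sub-chord. A symmetric argument for $\sigma([t^*,t_2])$ would then force the full arc $\sigma([t_1,t_2])$ into a single lens over $\sigma(t_1),\sigma(t_2)$, contradicting the hypothesis that $\sigma$ is long. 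Hence both subarcs must themselves be long, and the splitting follows by iterating the observation at every crossing of $\sigma$ with the chord.

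The spiralling part of the third claim is the main obstacle, since the notion of \emph{spiralling} is only described informally. I would first make it precise: $\sigma$ spirals about its terminals when the winding number of $\sigma$ about a chosen interior reference point of the chord exceeds one in absolute value. The $\kappa$-constraint together with Lemma \ref{r1} forces the crossings $\sigma(s_1),\ldots,\sigma(s_m)$ of $\sigma$ with the line $\ell$ through the terminals to be finite in number and distributed along $\ell$ in a controlled, monotone fashion. Choosing $t^*=s_m$ to be the last crossing at which the partial winding is maximal, the remaining tail $\sigma([t^*,t_2])$ has both endpoints on $\ell$ and no further intersections with $\ell$; Lemma \ref{r1} then confines this tail to the sub-region already swept by $\sigma([t_1,t^*])$, yielding the claimed inclusion $\sigma([t_1,t_2])\subset\sigma([t_1,t^*])$ as sets. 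The delicate point will be to formalise the monotonicity of the winding along successive crossings and to verify that the tail sits geometrically inside the spiral already traced, for which I expect to combine Lemma \ref{r1} with the uniqueness of the long isotopy class furnished by Observation \ref{connect}.
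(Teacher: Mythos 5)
The paper offers no argument for this statement at all: it is an \emph{Observation} supported only by Figure \ref{spiralex}, so there is no ``paper proof'' to match. Judged on its own terms, your proposal has two genuine gaps. For the zigzag claim, your contradiction only closes when \emph{both} subarcs $\sigma([t_1,t^*])$ and $\sigma([t^*,t_2])$ are short: you assume the first is short, then invoke ``a symmetric argument'' for the second, and only the conjunction forces the whole curve into the lens over $\sigma(t_1),\sigma(t_2)$. The negation of ``both subarcs are long'' also includes the case ``one short, one long,'' which your argument never excludes --- and that case really occurs (take a long curve whose first crossing of the chord happens at a point at tiny arc-distance from $\sigma(t_1)$; the initial piece is then short). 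So either the splitting must be restricted to crossings that genuinely separate excursions outside the lens, or the statement needs the weaker form ``at least one subarc is long.'' Your appeal to Lemma \ref{r1} here is also misplaced: confinement of a short curve to its lens is Theorem 4.21 of \cite{papere} via Observation \ref{connect}, and Lemma \ref{r1} (a no-u-turn statement in a band) does not by itself prevent a short subarc from reaching the far side of its sub-chord.

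The more serious problem is the spiralling claim, which you have misread. The figure caption says a spiralling long curve ``can be completed to a longer curve'': the intended $t^*$ lies \emph{beyond} $t_2$ in the ambient loop, so that $[t_1,t_2]\subset[t_1,t^*]$ and the inclusion $\sigma([t_1,t_2])\subset\sigma([t_1,t^*])$ is an inclusion of parameter intervals, the content being that the extension can be chosen to end on the line through the original terminals. You instead take $t^*\in(t_1,t_2)$ and try to prove that the whole curve is a subset of a proper initial piece, which is impossible for any curve that does not retrace itself; and the step you offer --- that Lemma \ref{r1} confines the tail $\sigma([t^*,t_2])$ to ``the sub-region already swept'' --- conflates containment in a region bounded by the earlier portion with set-theoretic containment in the earlier portion's image. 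That inference does not yield $\sigma([t_1,t_2])\subset\sigma([t_1,t^*])$ under your reading, and under the correct reading the work needed is different (showing the spiral can be prolonged until it meets the chord line again). The first claim, contrasting short and long curves via the simple connectivity of the lens, is fine as an informal justification.
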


\begin{figure}[h]
	\centering
	\includegraphics[width=1\textwidth,angle=0]{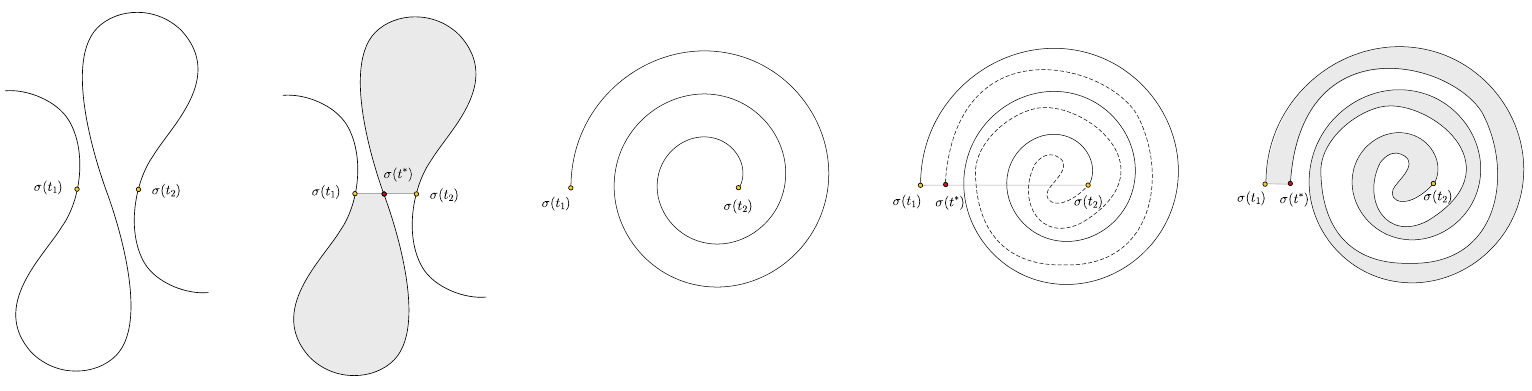}
\caption{A zigzag can be decomposed into adjacent long curves. A long curve that spiral around its terminals can be completed to a longer curve.}
	\label{spiralex}
\end{figure}

\begin{definition}\label{dfnarc1} A $\kappa$-constrained curve $\sigma:I\to \mathbb R^2$ is called an arc if the region enclosed by its image and the line connecting its terminal $\sigma(\partial I)$ is homeomorphic to a disk. An arc with terminals removed is called open.
\end{definition}


\newpage
\begin{theorem} \label{dfnlong} A $\kappa$-constrained loop admits a long arc. 
\end{theorem}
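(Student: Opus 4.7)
The idea is to cut the loop at two parameter values on either side of a point of strict local convexity whose convex side faces the interior; the complementary (parameter-long) subcurve will then be shown to be a long arc in the sense of Definitions \ref{dfnarc} and \ref{dfnarc1}.

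First I would locate a suitable convex point. Let $\Omega$ denote the bounded component of $\mathbb{R}^2\setminus\sigma(S^1)$ given by the Jordan curve theorem. By the Umlaufsatz applied to the simple closed $C^1$ loop $\sigma$, one has $\int_\sigma k_s\,ds=\pm 2\pi$, where $k_s$ denotes the signed curvature; in particular $k_s$ does not vanish identically. Choose $t_0$ in the interior of some $C^2$ piece where $k_s(t_0)$ is nonzero with the sign corresponding to $\sigma$ turning toward $\Omega$. By continuity, $k_s$ keeps this sign on a neighborhood $U$ of $t_0$, so $\sigma$ is strictly locally convex there with convex side facing $\Omega$. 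Writing $p_0:=\sigma(t_0)$, for any $t_1<t_0<t_2$ in $U$ the open chord $(\sigma(t_1),\sigma(t_2))$ lies in $\Omega$ and is disjoint from $\sigma([t_1,t_2])$.

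Next I would choose terminals and build the candidate arc. Set $D_0:=\mathrm{diam}(\sigma(S^1))>0$ and fix $q\in\sigma(S^1)$ with $\|q-p_0\|\geq D_0/2$. Pick $t_1,t_2\in U$ with $t_1<t_0<t_2$, $\sigma(t_1)\neq\sigma(t_2)$ (possible by injectivity of $\sigma$), and close enough to $t_0$ that $d:=\|\sigma(t_1)-\sigma(t_2)\|$ satisfies $d<2r$ and $d<\|q-p_0\|/3$. Let $\gamma$ be the restriction of $\sigma$ to the long complementary arc of $S^1$, so its terminals are $\sigma(t_1),\sigma(t_2)$. Since the open chord lies in $\Omega$ while $\gamma\subset\partial\Omega$, the union $\gamma\cup[\sigma(t_1),\sigma(t_2)]$ is a simple closed curve; by the Jordan curve theorem the region it bounds is a topological disk, so $\gamma$ is an arc in the sense of Definition \ref{dfnarc1}.

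Finally I would verify that $\gamma$ is long. The lens $D_1\cap D_2$ has $\sigma(t_1)$ on its boundary and diameter $d$, so every point of the lens is within distance $d$ of $\sigma(t_1)$. Yet $\|q-\sigma(t_1)\|\geq\|q-p_0\|-\|\sigma(t_1)-p_0\|>d$, so $q\notin D_1\cap D_2$. For $t_1,t_2$ close enough to $t_0$, the short piece $\sigma([t_1,t_2])$ lies in a small neighborhood of $p_0$ and hence cannot contain $q$, giving $q\in\gamma$. Combined with $d<2r$, this shows $\gamma$ is long in the sense of Definition \ref{dfnarc}, and therefore $\gamma$ is a long arc.

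The hardest step will be justifying that $\gamma\cup[\sigma(t_1),\sigma(t_2)]$ is a simple Jordan curve --- equivalently, that the chord stays in $\Omega$ and does not meet $\sigma$ except at its endpoints. Strict local convexity at $p_0$ with $k_s$ of the correct sign on the whole neighborhood $U$ is what secures this; at a flat segment or inflection point of $\sigma$ the chord could coincide with or cross the loop, in which case the enclosed set would not be a topological disk and a different local argument would be required.
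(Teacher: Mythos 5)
Your proof is correct, but it takes a genuinely different route from the paper's. The paper argues by contradiction: it picks arbitrarily close terminals, assumes the loop admits no long arc so that both complementary subcurves are short, invokes Theorem 4.21 of \cite{papere} to confine both subcurves to the lens $D_1\cap D_2$, and then derives a contradiction by comparing the one-sided tangent directions of the two subcurves at their common terminal (they are forced into opposite quadrants cut out by the tangents to $\partial D_1$ and $\partial D_2$, violating $C^1$ regularity); it then still has to split or complete the resulting long curve, via Observation \ref{longarc}, to obtain a long \emph{arc}. You instead construct the long arc directly: the Umlaufsatz supplies a point where the signed curvature turns the loop toward the bounded component $\Omega$, nearby terminals give a lens of diameter $d<2r$ that is metrically too small to contain a diameter-realizing point $q$ of the loop, and local strict convexity places the open chord in $\Omega$, so the complementary subcurve together with the chord is automatically a Jordan curve, i.e.\ an arc in the sense of Definition \ref{dfnarc1}. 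Your argument buys independence from the classification machinery of \cite{papere} (Theorem 4.21 is never used) and avoids the splitting/completion step, and in fact it shows the conclusion holds for any simple closed $C^1$ loop irrespective of the curvature bound; what it costs is the extra apparatus of the Umlaufsatz and the local-convexity analysis needed to certify that the chord misses $\sigma$, plus a few quantitative choices (e.g.\ ensuring $\|\sigma(t_1)-p_0\|<\|q-p_0\|-d$, and that the $\delta$-neighborhood of $p_0$ meets no far-parameter points of $\sigma$) that you gesture at but should state explicitly. The paper's argument, by contrast, stays entirely inside its short/long dichotomy, which is the framework used throughout the rest of the paper.
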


\begin{proof} Every $\kappa$-constrained loop admit a short arc since terminals can be chosen to be arbitrarily close. Consider a short arc $\alpha:[0,s]\to \mathbb R^2$ in an arc length parametrised loop $\sigma:[0,\ell]\to \mathbb R^2$ with $\sigma(0)=\sigma(\ell)$. Suppose that this loop does not admit a long arc. In this case, the terminals of $\alpha$ split the loop into two complementary short arcs $\alpha:[0,s]\to \mathbb R^2$, $\beta:[s,\ell]\to \mathbb R^2$.
 
Since $\alpha$ is short, Theorem 4.21 in \cite{papere} implies that $\alpha([0,s])\subset  D_1\cap D_2$ where $D_1, D_2$ are the radius $r$ disks with $\alpha(0), \alpha(s)\in \partial D_1\cap \partial D_2$, see Fig. \ref{endpartan} left. Since $\beta$ is also short, Theorem 4.21 in \cite{papere} implies that $\beta([s,\ell])\subset  D_1\cap D_2$ with $\beta(s), \beta(\ell)\in \partial D_1\cap \partial D_2$. Let $L_1$ and $L_2$ be the lines tangent to $\partial D_1$ and $\partial D_2$ at $\alpha(s)=\beta(s)$ respectively, see Fig. \ref{endpartan}. These lines divide the plane into four quadrants. Since $\alpha([0,s])\subset  D_1\cap D_2$ the bound on curvature implies that $\alpha'(s)$ can only range in quadrant I. On the other hand, since $\beta([s,\ell])\subset  D_1\cap D_2$, we have that $\beta'(s)$ can only range in quadrant III. We conclude that $\alpha'(s)\neq \beta'(s)$ implying that $\sigma'(s)$ is not of class $C^1$ contradicting the smoothness of $\sigma$. Therefore $\beta$ must have a point in $(D_1\cap D_2)^c$ concluding that is a long curve. We conclude the proof by noting that this long curve can be splitted or completed as in Observation \ref{longarc} to obtain the desired long arc.
\end{proof}

\begin{figure}[h]
	\centering
	\includegraphics[width=1\textwidth,angle=0]{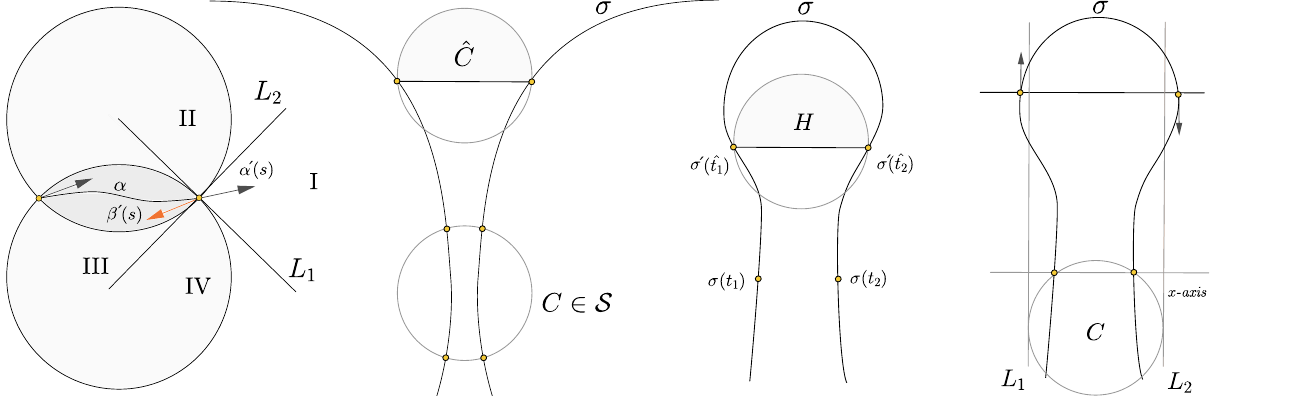}
\caption{From left to right. The notation for Theorem \ref{dfnlong}. A long arc with distinguished end characterised by $\hat C\in \mathcal S$ in Theorem \ref{maxterminal}. An end does not intersect the half disk $H$ supporting its terminals, otherwise it would admit essential terminals. An illustration of a cross section.}
	\label{endpartan}
\end{figure}

\begin{definition}\label{end} If $\sigma:( t_1, t_2)\subset S^1\to \mathbb R^2$ does not admit essential terminals, then $\sigma:[ t_1, t_2]\to \mathbb R^2$ is called an end.  An end is called essential if this is a subset of an arc whose terminals are essential. An end that is not essential is called inessential. 
\end{definition}

\begin{theorem}\label{maxterminal}  A long arc admits an end. The terminals of an end are distant apart $2r$.
\end{theorem}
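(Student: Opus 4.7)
The plan is a minimality argument on sub-intervals of the given long arc. Write $\alpha:[a,b]\to\mathbb R^2$ for the long arc and set
\[
\mathcal F = \{[s,t]\subseteq[a,b] : \alpha|_{[s,t]} \text{ is itself a long sub-arc}\}.
\]
Since $[a,b]\in\mathcal F$, this family is nonempty. A long sub-arc must leave its lens, hence has positive length, so $\ell := \inf_{[s,t]\in\mathcal F}(t-s) > 0$. Pick $[s_n,t_n]\in\mathcal F$ with $t_n-s_n\to\ell$ and, by compactness of $[a,b]^2$, extract a subsequence with $s_n\to t_1$ and $t_n\to t_2$, so $t_2-t_1 = \ell > 0$. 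My candidate end is $\alpha|_{[t_1,t_2]}$.

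For the end property, suppose $t_1 < s' < t' < t_2$ with $\alpha|_{[s',t']}$ essential. Then $[s',t']\in\mathcal F$ while $t'-s' < \ell$, contradicting the choice of $\ell$. Hence the open restriction $\alpha|_{(t_1,t_2)}$ admits no essential terminals, and $\alpha|_{[t_1,t_2]}$ is an end in the sense of Definition \ref{end}.

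To show the terminals are at distance $2r$, assume for contradiction that $d := \|\alpha(t_1)-\alpha(t_2)\| < 2r$. Each $\alpha|_{[s_n,t_n]}$ is long, so it carries an outward point $\alpha(c_n)$ in the complement of its lens $D_1^{(n)}\cap D_2^{(n)}$, with $c_n\in(s_n,t_n)$. Extract a further subsequence so that $c_n\to c \in [t_1,t_2]$. By Observation \ref{connect}, for fixed terminals at chord length strictly less than $2r$ the short and long classes of $\kappa$-constrained curves form disjoint $C^1$-connected components, so a $C^1$ limit of long sub-arcs whose terminal distance stays below $2r$ is itself long. In particular $\alpha|_{[t_1,t_2]}$ is a long sub-arc and $\alpha(c)$ lies strictly outside its lens. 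But then, for all sufficiently small $\varepsilon>0$, the sub-arc $\alpha|_{[t_1+\varepsilon,t_2-\varepsilon]}$ still has chord length $<2r$ (by continuity of $d$ in the endpoints) and still contains $\alpha(c)$ outside its perturbed lens. Thus $[t_1+\varepsilon,t_2-\varepsilon]\in\mathcal F$ has length $\ell-2\varepsilon<\ell$, contradicting the choice of $\ell$. Therefore $d = 2r$.

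The most delicate point is the persistence of the \emph{long} property in the limit: the outward points $\alpha(c_n)$ must not drift onto the boundary of the limiting lens, for otherwise $\alpha|_{[t_1,t_2]}$ would merely touch its lens and the shrinking step would fail. This rigidity is exactly what Observation \ref{connect} and Lemma \ref{r1} (no $u$-turn in a band of width $2r$) provide, preventing the degenerate collapse of the long/short dichotomy as one varies the terminals continuously along $\alpha$.
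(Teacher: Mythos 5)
Your strategy---minimise the (parameter-)length over all long sub-arcs, extract a limiting interval by compactness, use minimality to exclude interior essential terminals, and a shrinking perturbation to push the terminal distance up to $2r$---is essentially the paper's own proof of Theorem \ref{maxterminal}; the paper sets up the same minimisation via the family $\mathcal S$ of radius-$r$ circles through pairs of points of the arc rather than via sub-intervals, but the two parametrisations are interchangeable. However, two steps in your version need repair. First, the inference ``each long sub-arc has positive length, so $\ell=\inf(t-s)>0$'' is a non sequitur: an infimum of positive quantities can vanish, and if $\ell=0$ your candidate end is degenerate and the whole construction collapses. A uniform lower bound does exist---by Lemma \ref{r1} a long sub-arc must leave the width-$2r$ band determined by its terminals and then return, which forces length on the order of $r$---but you must actually invoke something of this kind; ``must leave its lens'' alone gives no bound, since the terminals sit on the boundary of the lens.

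Second, the ``most delicate point'' you flag is not discharged by what you cite. Observation \ref{connect} separates short from long curves \emph{with fixed terminals} into distinct connected components; your minimising sequence $\alpha|_{[s_n,t_n]}$ has \emph{varying} terminals, and ``long'' (possession of a point in the open complement of the lens) is an open condition, so it can a priori be lost in the limit, with the outward points $\alpha(c_n)$ drifting onto the boundary of the limiting lens. Closing this requires a genuine rigidity statement---for instance, that a $\kappa$-constrained arc contained in its closed lens and internally tangent to one of the bounding radius-$r$ circles must locally coincide with that circle, which then again yields essential terminals or a shorter long sub-arc. In fairness, the paper's own proof carries the mirror image of this issue (it asserts without argument that $\mathcal S$ is closed and that the minimiser still has a point above $\hat C$), so your proposal is no less rigorous than the original; but since you explicitly rest this step on Observation \ref{connect}, you should note that, as stated, that observation does not cover the varying-terminal limit you need.
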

\begin{proof}   Let $\sigma: [t_1^*,t_2^*] \rightarrow {\mathbb R}^2$ be a long arc. 

Let $\mathcal S$ be the set of radius $r$ circles $C\subset \mathbb R^2$ such that $\sigma(t_1),\sigma(t_2)\in C$ are the terminals of an arc $\sigma: [t_1,t_2] \rightarrow {\mathbb R}^2$ satisfying:
\begin{itemize}
\item $||\sigma(t_1)-\sigma(t_2)||\leq 2r$;
\item $\sigma([t_1,t_2])\subset \sigma([t_1^*,t_2^*])$ has a point above $C$.
\end{itemize}

It is easy to see that $\mathcal S$ is closed and bounded considered as a subset of the plane consisting of the centres of the circles $C \in \mathcal S$. Due to compactness, there exists a circle $\hat C \in \mathcal S$ such that $\sigma(\hat t_1),\sigma(\hat t_2)\in \hat C$ are the terminals of an arc $\sigma: [\hat t_1,\hat t_2] \rightarrow {\mathbb R}^2$ having the smallest length amongst all the circles in $\mathcal S$, see Fig. \ref{endpartan}. If $\sigma: (\hat t_1,\hat t_2) \rightarrow {\mathbb R}^2$ admits essential terminals, then the minimality of the length of $\sigma: [\hat t_1,\hat t_2] \rightarrow {\mathbb R}^2$ is contradicted, since the existence of essential terminals in $\sigma: (\hat t_1,\hat t_2) \rightarrow {\mathbb R}^2$ would imply the existence of a shorter long arc. We conclude that $\sigma: [t_1,t_2] \rightarrow {\mathbb R}^2$ admits an end.

Suppose that the terminals of an end $\sigma(\hat t_1),\sigma(\hat t_2)\in \hat C$ satisfy $||\sigma(\hat t_1)-\sigma(\hat t_2)||< 2r$. Then a small perturbation of $\hat C$ would reduce the length of the end contradicting the minimality of the end. The terminals of an end are antipodes in $\hat C\in \mathcal S$, concluding the proof.
\end{proof}

\begin{corollary}\label{maxterminal2}  A $\kappa$-constrained loop admits an end.
 \end{corollary}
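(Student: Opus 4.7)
The plan is to derive Corollary \ref{maxterminal2} as an immediate consequence of the two preceding results, with essentially no new content beyond chaining them together. First I would invoke Theorem \ref{dfnlong}, which guarantees that every $\kappa$-constrained loop $\sigma:S^1\to\mathbb R^2$ admits at least one long arc; let us call it $\sigma:[t_1^*,t_2^*]\to \mathbb R^2$.

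Next I would feed this long arc directly into Theorem \ref{maxterminal}. That theorem produces, via a compactness argument on the set $\mathcal S$ of circles witnessing the long-arc condition, a subarc $\sigma:[\hat t_1,\hat t_2]\to\mathbb R^2$ of minimal length whose open interior $\sigma:(\hat t_1,\hat t_2)\to\mathbb R^2$ contains no essential terminals. By Definition \ref{end}, this subarc is an end. Since the long arc was extracted from the loop $\sigma$, this end lives inside the loop as required.

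There is really no obstacle to overcome here: the corollary is just the composition of the two existence statements. The only thing worth remarking is that one does not need any extra hypothesis such as convexity or smoothness beyond the piecewise $C^2$ bounded-absolute-curvature requirement already baked into the definition of a $\kappa$-constrained loop. Thus the proof reduces to two lines: apply Theorem \ref{dfnlong} to obtain a long arc, then apply Theorem \ref{maxterminal} to that long arc to extract an end, which is then an end of the loop itself.
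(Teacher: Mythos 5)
Your proposal is correct and follows exactly the paper's own two-line argument: Theorem \ref{dfnlong} supplies a long arc, and Theorem \ref{maxterminal} extracts an end from it. Nothing further is needed.
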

 \begin{proof} By Theorem  \ref{dfnlong} a $\kappa$-constrained loop admits a long arc. The result follows by Proposition \ref{maxterminal}.
 \end{proof}
 
Ends are objects that uniquely characterise families of long arcs. Suppose that $\sigma: [\hat t_1,\hat t_2] \rightarrow {\mathbb R}^2$ is an end. Then this end may also be the end of the family of long arcs $\sigma: [\hat t_1-\epsilon,\hat t_2+\epsilon] \rightarrow {\mathbb R}^2$ for some $\epsilon>0$. In particular, $\sigma([\hat t_1,\hat t_2])\subset \sigma([\hat t_1-\epsilon,\hat t_2+\epsilon])$. 
 
\begin{theorem} \label{dfnlonglength}  A long arc has length at least $\pi r$. 
\end{theorem}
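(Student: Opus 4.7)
The plan is to reduce the length estimate to the end produced by Theorem \ref{maxterminal}, and then bound the length of the end by a polar winding argument based at the centre of the minimizing circle.

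First, apply Theorem \ref{maxterminal} to the given long arc to extract an end $\alpha:[\hat t_1,\hat t_2]\to\mathbb R^2$ whose terminals $p:=\alpha(\hat t_1)$ and $q:=\alpha(\hat t_2)$ are antipodal on the minimizing circle $\hat C\in\mathcal S$, so $\|p-q\|=2r$. Because $\alpha$ is a sub-arc of the long arc, its length is a lower bound for the length of the long arc, so it suffices to show $\mathrm{length}(\alpha)\ge\pi r$. Choose coordinates so that $\hat C$ is the circle of radius $r$ centred at the origin with $p=(-r,0)$ and $q=(r,0)$, and let $\hat D$ be the closed disk bounded by $\hat C$. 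Since $\alpha\in\mathcal S$, there is a point $\alpha(s_0)$ above $\hat C$, hence outside $\hat D$, so $\rho(s_0):=\|\alpha(s_0)\|>r$.

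The key intermediate step is to show that $\alpha$ has no interior crossing of $\hat C$. I would argue this by minimality of the end: if $\alpha(s^*)\in\hat C$ for some $s^*\in(\hat t_1,\hat t_2)$, then cut $\alpha$ at $s^*$; whichever half contains the distinguished point $\alpha(s_0)$ still has both terminals on $\hat C$, chord at most $2r$, and a point above $\hat C$, so it belongs to $\mathcal S$ with the same circle $\hat C$ but is strictly shorter than $\alpha$, contradicting the minimality of $\alpha$ in $\mathcal S$ established in Theorem \ref{maxterminal}. Together with the continuity of $s\mapsto\rho(s)$, the boundary values $\rho(\hat t_1)=\rho(\hat t_2)=r$, and the intermediate value $\rho(s_0)>r$, this forces $\rho(s)>r$ for every $s\in(\hat t_1,\hat t_2)$.

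With $\rho\ge r>0$ throughout $[\hat t_1,\hat t_2]$, write $\alpha(s)=\rho(s)(\cos\phi(s),\sin\phi(s))$ for a continuous choice of the angular coordinate $\phi$. The arc-length identity $|\alpha'|^2=(\rho')^2+(\rho\phi')^2=1$ yields $|\phi'(s)|\le 1/\rho(s)\le 1/r$. Because $\phi(\hat t_1)\equiv\pi$ and $\phi(\hat t_2)\equiv 0\pmod{2\pi}$, any continuous lift satisfies $\phi(\hat t_2)-\phi(\hat t_1)=-\pi+2\pi k$ for some integer $k$, and therefore $|\phi(\hat t_2)-\phi(\hat t_1)|\ge\pi$. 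Combining the bounds,
$$\pi\le\left|\int_{\hat t_1}^{\hat t_2}\phi'(s)\,ds\right|\le\int_{\hat t_1}^{\hat t_2}|\phi'(s)|\,ds\le\frac{\mathrm{length}(\alpha)}{r},$$
so $\mathrm{length}(\alpha)\ge\pi r$, which yields the stated bound for the long arc. The main obstacle is the no-interior-crossing step: one must verify that the shorter sub-arc obtained by splitting at $\alpha(s^*)$ genuinely belongs to $\mathcal S$ with the same circle $\hat C$. The chord bound is automatic since both terminals lie on a radius-$r$ circle, while the "point above $\hat C$" condition is inherited by whichever half contains $s_0$; once this is in hand, the remaining angular estimate is a direct consequence of the arc-length parametrization and the trapping of $\alpha$ outside $\hat D$.
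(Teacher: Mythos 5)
Your proposal is correct and follows essentially the same route as the paper: extract the end from Theorem \ref{maxterminal}, place polar coordinates at the centre of $\hat C$ (the midpoint of the antipodal terminals), show the end is trapped outside the open disk so that $\rho\ge r$, and integrate the angular sweep of $\pi$ to get length $\ge \pi r$. The only difference is cosmetic: you justify $\rho\ge r$ by cutting at a hypothetical interior crossing of $\hat C$ and invoking the minimality from Theorem \ref{maxterminal}, whereas the paper argues that touching the half-disk boundary $H$ would produce essential terminals inside the end; both are instances of the same minimality/end property.
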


\begin{proof} By Theorem \ref{maxterminal} a long arc admits an end. The result follows after proving that an end has length at least $\pi r$. Consider a long arc whose end $\sigma :[0,\ell] \rightarrow {\mathbb R}^2$ has been reparametrised by its arc length given by $\sigma (t)=(-\rho(t)\cos \theta(t), \rho(t) \sin \theta(t))$ with $\rho,\theta:[0,\ell]\to \mathbb R^2$.

By Theorem \ref{maxterminal} the terminals of an end satisfy $||\sigma(0)-\sigma(\ell)||=2r$. Set a coordinate system in which the $x$-axis passes through $\sigma(0)=(-r,0)$ and $\sigma(\ell)=(r,0)$ with the origin being the midpoint between them. We obtain that $\rho(0)=\rho(\ell)=r$ and $\theta(0)=0$ and $\theta(\ell)=\pi$.
Note that,
$$\int_0^{\ell}\,||\sigma'(t)||\,dt =\int_0^{\ell} \sqrt{{||\rho'(t)||^2+ \rho(t)^2}\,||\theta'(t)||^2}\,\, dt \geq  \int_0^{\ell} { \rho(t)||{\theta}'(t)||}\,\, dt.$$

Let $H$ be the radius $r$ half disk with nonnegative ordinate centred at the origin. Clearly, the end cannot intersect a point of positive ordinate in $\partial H$ unless the end coincides with the semicircle $\partial H$. If so, $\sigma$ would admit essential terminals and not longer be an end, see Fig. \ref{endpartan}. We conclude that $\rho(t)\geq r$ for points in the upper half plane. Since $\theta(\ell)=\pi $ we have that,

$$ \int_0^{\ell}\,||\sigma'(t)||\,dt \geq \int_0^{\ell} { \rho(t)||{\theta}'(t)||}\,\, dt \geq \pi r.$$
\end{proof}

The next result is an inclusion principle for long $\kappa$-constrained arcs. This can be view as a half of Pestov-Ionin theorem \cite{pestov}.

\begin{theorem}\label{halfdisk} (Half disk). The region enclosed by a long arc and the line segment connecting its terminals encloses a half disk of radius $r$.
\end{theorem}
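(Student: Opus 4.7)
The plan is to identify the candidate half disk via the end guaranteed by Theorem \ref{maxterminal} and reuse the coordinate setup from the proof of Theorem \ref{dfnlonglength}. Let $\sigma:[t_1^*,t_2^*]\to \mathbb R^2$ be the given long arc. By Theorem \ref{maxterminal} it contains an end $\sigma|_{[\hat t_1,\hat t_2]}$ whose terminals are antipodes on a radius $r$ circle $\hat C$, so $||\sigma(\hat t_1)-\sigma(\hat t_2)||=2r$. Place coordinates with $\sigma(\hat t_1)=(-r,0)$, $\sigma(\hat t_2)=(r,0)$, $\hat C=\{x^2+y^2=r^2\}$, and the end in $\{y\ge 0\}$. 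The candidate half disk is $H=\{(x,y):x^2+y^2\le r^2,\ y\ge 0\}$.

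First I would record what is essentially already shown inside the proof of Theorem \ref{dfnlonglength}: the end lies in $\{y\ge 0\}\setminus int(H)$. Being an arc, the end cannot cross its own chord, so it lies in the closed upper half plane; there $\rho(t)\ge r$; and it cannot meet $\partial H\cap\{y>0\}$, for then a subarc would coincide with the upper semicircle of $\partial H$ and admit essential terminals, contradicting Definition \ref{end}. Consequently the end together with the chord bounds a region $R_E$ with $H\subseteq cl(R_E)$.

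The main step, and the principal obstacle, is to extend this to the whole long arc by proving that the two tails $\sigma|_{[t_1^*,\hat t_1]}$ and $\sigma|_{[\hat t_2,t_2^*]}$ also avoid $int(H)$. Suppose a tail enters $int(H)$. Then by continuity it crosses $\partial H$ at a point different from $(\pm r,0)$. A crossing at $\partial H\cap\{y>0\}$ gives a subarc of $\sigma$ with terminals on a suitably rotated radius $r$ circle at distance at most $2r$, having a point above that circle; this is a strictly shorter member of the family $\mathcal S$ from Theorem \ref{maxterminal}, contradicting the minimality of the end. A crossing at the diameter $\partial H\cap\{y=0\}$ is ruled out by confining the relevant portion of the tail to a band of width $2r$ above the chord and applying Lemma \ref{r1} to forbid the required u-turn.

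Given that the entire long arc misses $int(H)$, I finish by a Jordan curve argument. The long arc together with its chord bounds the topological disk $R_L$ of Definition \ref{dfnarc1}. Since the long arc's chord has length strictly less than $2r$ and its endpoints lie outside $H$ by the tail analysis, a direct check shows this chord also misses $int(H)$. Thus $int(H)$ is connected and disjoint from $\partial R_L$, so it lies wholly inside or wholly outside $R_L$; comparing with any point of $R_E$ adjacent to the end, whose orientation as part of $\partial R_E$ agrees with its orientation as part of $\partial R_L$, places $int(H)\subseteq R_L$. Therefore $H\subseteq cl(R_L)$, giving the desired inclusion of a radius $r$ half disk inside the region enclosed by the long arc and its chord.
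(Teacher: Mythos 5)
Your proposal follows essentially the same route as the paper's proof: locate the end via Theorem \ref{maxterminal}, take $H$ to be the half disk from the proof of Theorem \ref{dfnlonglength}, and show the curve cannot meet $\partial H$ away from the end's terminals (semicircular part by minimality of the end, diameter by the essential-terminal/Lemma \ref{r1} obstruction). You are in fact more thorough than the paper, which silently restricts attention to the end and never addresses the tails of the long arc or the final topological inclusion of $H$ in the region bounded by the long arc and its chord; your only soft spot is the ``direct check'' that the long arc's chord misses $int(H)$, which does not follow merely from its length being less than $2r$ and its endpoints lying outside $H$, and would need the tail analysis to be invoked more explicitly.
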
 

\begin{proof}By Theorem \ref{maxterminal} a long arc admits an end. In the proof of Theorem \ref{dfnlonglength} we determined that the end cannot intersect $\partial H$ in a point other than its terminals unless the end coincides with the semicircle $\partial H$. In addition, the end cannot intersect the points in $\partial H$ of zero ordinate other than its terminals, otherwise essential terminals are obtained, contradicting the definition of end. We conclude that $H$ in Theorem  \ref{dfnlonglength} is the desired half disk.  
\end{proof}

\begin{corollary} \label{2halfpestov} A $\kappa$-constrained loop has length at least $2\pi r$.
\end{corollary}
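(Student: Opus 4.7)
My plan is to decompose the loop at the terminals of an end and show that each of the two complementary arcs contributes at least $\pi r$ to the length.

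By Corollary \ref{maxterminal2}, $\sigma$ admits an end $E$ whose terminals $p, q$ satisfy $\|p - q\| = 2r$ by Theorem \ref{maxterminal}, and the proof of Theorem \ref{dfnlonglength} already establishes $|E| \geq \pi r$. The terminals split $\sigma$ into $E$ and a complementary arc $E'$ with the same terminals, so it suffices to prove $|E'| \geq \pi r$.

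I would argue this by cases on whether the open interior of $E'$ admits essential terminals. If it does not, then by Definition \ref{end} $E'$ is itself an end, and its terminals are antipodal on the radius-$r$ circle $\hat C$. The polar-integral argument in the proof of Theorem \ref{dfnlonglength} then applies to $E'$ with the half-disk taken on the side of the chord $[p, q]$ opposite to the one enclosed by $E$: the essential-terminal obstruction that forces $\rho(t) \geq r$ for an end is symmetric with respect to the chord, so the same integral bound yields $|E'| \geq \pi r$. If instead $E'$ does admit essential terminals $a, b$ in its interior, then by Definition \ref{dfnarc} both complementary arcs of $\sigma$ with these terminals are long; in particular the sub-arc of $E'$ joining $a$ to $b$ is a long arc, and Theorem \ref{dfnlonglength} applied to that sub-arc gives $|E'| \geq \pi r$ directly.

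In either case $|\sigma| = |E| + |E'| \geq 2\pi r$, proving the corollary. The main obstacle is the first case: one must verify that the proof of Theorem \ref{dfnlonglength} really does transfer to $E'$ in the opposite-side half-plane. Concretely, this amounts to showing that $E'$---being an end without essential terminals in its interior---cannot meet the mirror half-disk except at its terminals, which mirrors the essential-terminal argument used for $E$ but requires the Jordan property of $\sigma$ to confine $E'$ to the opposite side of the chord $[p,q]$ from $E$.
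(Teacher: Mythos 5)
Your decomposition of the loop at the terminals $p,q$ of an end, and your second case (essential terminals $a,b$ inside $E'$, giving a long sub-arc of $E'$ and hence $|E'|\geq\pi r$ by Theorem \ref{dfnlonglength}), are sound and in the same spirit as half of the paper's argument. The genuine gap is in your first case. The polar-integral bound in Theorem \ref{dfnlonglength} does not rest on Definition \ref{end} alone: it works for $E$ because $E$ is produced by the minimality construction over the family $\mathcal S$ in Theorem \ref{maxterminal}, and it is that minimality which forces $E$ to stay above $\hat C$, i.e.\ outside the open half-disk $H$, which is exactly what gives $\rho(t)\geq r$. The complementary arc $E'$ enjoys no such minimality. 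Knowing only that its interior contains no essential terminals (so that it is an ``end'' in the sense of Definition \ref{end}) and that its terminals are $2r$ apart does not prevent $E'$ from entering the open disk bounded by $\hat C$ --- for instance from running close to the diameter $[p,q]$ --- and wherever it does so one has $\rho<r$ and the estimate $\int\rho\,\|\theta'\|\geq r\,|\Delta\theta|$ collapses. The Jordan property does not rescue this: it separates the two complementary regions of the loop, but the mirror half-disk $H'$ need not lie in either one, and confinement of $E'$ to the far side of the chord (even if it held) would still not keep $E'$ out of $H'$. Note also that in your first case the loop may still admit essential terminals with one point in $E$ and one in $E'$, so you cannot fall back on an enclosed tangent disk there either.

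The paper avoids this by splitting on a different dichotomy: whether the \emph{loop} admits essential terminals anywhere. If it does, the two complementary long arcs at those terminals each have length at least $\pi r$ by Theorem \ref{dfnlonglength} (this is the analogue of your second case, applied to a decomposition at the essential terminals rather than at the terminals of an end). If it does not, a radius-$r$ disk tangent to the loop cannot meet the loop transversally, hence is enclosed by it, and the polar integral is then taken around the \emph{entire} loop with $\theta$ sweeping $2\pi$ and $\rho\geq r$ throughout --- no separate bound on $|E'|$ is ever needed. If you want to keep your end-based decomposition, you would need a new argument that the complement of an end has length at least $\pi r$ (for example, that it must contain another end, or a total-turning count); as written, that inequality is asserted but not proved.
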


\begin{proof} Suppose the loop admits a pair of essential terminals. Then, we apply Theorem \ref{dfnlonglength} to each of the two long arcs with common essential terminals to conclude that the loop has length at least $2\pi r$. 

If the loop admits only inessential terminals, the boundary of a radius $r$ disk $D$ tangent to the loop cannot intersect the loop transversally. Otherwise, a pair of essential terminals is obtained. Consider the centre of $D$ to be the origin. We proceed similarly as in Theorem \ref{dfnlonglength}. Consider the arc length parametrised loop $\sigma :[0,\ell] \rightarrow {\mathbb R}^2$ in polar coordinates $\sigma (t)=(-\rho(t)\cos \theta(t), \rho(t) \sin \theta(t))$ with $\rho,\theta:[0,\ell]\to \mathbb R^2$. Since $D$ is enclosed by the loop we have that $\rho(t)\geq r$. Since $\theta(\ell)=2\pi$ we conclude that 
$$\int_0^{\ell}\,||\sigma'(t)||\,dt \geq  \int_0^{\ell} { \rho(t)||{\theta}'(t)||}\,\, dt\geq 2\pi r.$$
\end{proof}

\begin{theorem} \label{ends} A $\kappa$-constrained loop has finitely many essential ends and infinitely many inessential ends.
\end{theorem}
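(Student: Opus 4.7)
The plan is to address the two claims separately, with the finiteness of essential ends being the harder direction.

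For finiteness: each essential end is, by Theorem \ref{maxterminal}, a long arc whose terminals are antipodal on a radius-$r$ circle, and by Theorem \ref{halfdisk} it encloses together with its chord a half disk $H$ of area $\pi r^{2}/2$. I would carry out a packing argument in two steps. First, show that the half disks attached to distinct essential ends have pairwise disjoint interiors. Second, observe that every such half disk is contained in the bounded planar region
\[
\mathcal R := \operatorname{int}(\mathcal K)\cup \{x\in\mathbb R^{2} : \operatorname{dist}(x,\sigma)\le r\},
\]
whose area is at most $\Area(\mathcal K) + 2\ell r$, where $\ell=\mathrm{length}(\sigma)<\infty$. The area budget of $\pi r^{2}/2$ per half disk then caps the number of essential ends by $2(\Area(\mathcal K)+2\ell r)/(\pi r^{2})$.

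For the infinitely many inessential ends, I would exploit Corollary \ref{2halfpestov}: $\ell\geq 2\pi r$, so in particular $\mathrm{diam}(\sigma)\geq 2r$, and hence there exists at least one pair $(s,t)\in S^{1}\times S^{1}$ with $\|\sigma(s)-\sigma(t)\|=2r$. Sliding this pair along the loop and applying the intermediate value theorem to the continuous distance function produces a one-parameter family of such pairs. By Part 1 there are only finitely many essential ends, hence only finitely many essential terminals, so for all but finitely many members of the family the open interval $(s,t)$ contains no essential terminal. For each such pair the closed subcurve $\sigma([s,t])$ is an end by Definition \ref{end}; its terminals, lying at distance exactly $2r$, are terminals of a simple curve and are therefore inessential. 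A continuity argument rules out essentiality of the end itself: were it essential, it would lie inside an essential arc, and deforming $(s,t)$ along the family would preserve this containment, giving a continuum of essential arcs and contradicting their finiteness.

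The principal obstacle is the disjointness of half disks in Part 1. I would argue by contradiction: suppose two essential ends $E_{1},E_{2}$ have half disks $H_{1},H_{2}$ with overlapping interiors. Two cases arise. If $E_{1}$ and $E_{2}$ share a subarc of $\sigma$, the overlap allows one to construct a strictly shorter arc in the family $\mathcal S$ of Theorem \ref{maxterminal}, contradicting the fact that $E_{i}$ realizes the minimum length there. If $E_{1}$ and $E_{2}$ are subarc-disjoint on $\sigma$, then in order for $H_{1}\cap H_{2}$ to be nonempty the two chords must cross; combined with the inclusion of each $H_{i}$ in the region bounded by $E_{i}$ and its chord, the Jordan property of $\sigma$ forces $\sigma$ to cross one of the chords an incompatible number of times, producing a contradiction. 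Once this disjointness is secured, the rest of the proof is a routine consequence of Theorems \ref{dfnlonglength}, \ref{halfdisk}, and Corollary \ref{2halfpestov}.
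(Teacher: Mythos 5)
Your plan replaces the paper's one-dimensional counting argument with a two-dimensional packing argument, and the substitution introduces a gap you have correctly identified but not closed. The paper's proof of finiteness is much more direct: distinct essential ends arise inside distinct long arcs (two long arcs sharing a pair of essential terminals each contribute an end via Theorem \ref{maxterminal}), each long arc has length at least $\pi r$ by Theorem \ref{dfnlonglength}, and the loop is rectifiable, so only finitely many can occur. That is a packing argument along the curve itself, where ``disjointness'' is essentially free. Your version requires the half disks of Theorem \ref{halfdisk} attached to distinct essential ends to have pairwise disjoint interiors, and this is the step that fails as sketched. Your two-case analysis (shared subarc versus subarc-disjoint) does not cover, and cannot easily handle, nested or overlapping ends: nothing in Definition \ref{end} prevents two ends $\sigma([\hat t_1,\hat t_2])\subset\sigma([\hat s_1,\hat s_2])$ from coexisting with visibly overlapping half disks (think of a long, gently curved stretch of $\partial\mathcal K$ of curvature well below $\kappa$, where many chords of length $2r$ subtend arcs free of essential terminals). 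The ``strictly shorter arc in $\mathcal S$'' contradiction you invoke is not developed and does not obviously apply, since the minimality in Theorem \ref{maxterminal} is relative to one fixed long arc, not across different ends. Without disjointness the area budget proves nothing.

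Two further points. First, the inference ``$\ell\geq 2\pi r$, so $\mathrm{diam}(\sigma)\geq 2r$'' is a non sequitur: a simple closed curve of large length can have small diameter in general, and the lower bound on length gives only an upper bound on nothing useful here. The correct source of a pair at distance exactly $2r$ is Corollary \ref{maxterminal2} together with Theorem \ref{maxterminal} (or Pestov--Ionin), not Corollary \ref{2halfpestov}. Second, for the infinitude of inessential ends you work much harder than necessary: the paper simply observes that inessential terminals form a continuum (any sufficiently close pair of points are terminals of a short arc, hence inessential by Definition \ref{dfnarc}), and each such pair determines an inessential end; your detour through distance-$2r$ pairs, the intermediate value theorem, and a terminal continuity argument about deforming essential arcs adds complication without adding rigour, and the final ``continuum of essential arcs'' contradiction is itself only gestured at. The overall architecture of your proof could in principle be repaired, but as written the finiteness half rests on an unproved and doubtful disjointness claim, and the paper's length-based route avoids the issue entirely.
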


\begin{proof}  A loop admitting no essential terminals has an empty set of essential ends being this finite of cardinality zero.  

By Theorem \ref{dfnlong} a loop admits a long arc. If the terminals of the long arc are essential, then two long arcs share these terminals. By applying Theorem \ref{maxterminal} to both arcs, we establish the existence of distinct ends. By Theorem \ref{dfnlonglength} a long arc has length at least $\pi r$. Therefore, there must be a finite number of essential ends. Otherwise, the rectifiability of the loop would be contradicted. 

If the long arc obtained by Theorem \ref{dfnlong} is inessential its terminals are also the terminals of a short arc. In addition, every pair of inessential terminals have unique inessential end. Since loops are parametrised in a continuum, there are infinitely many choices of inessential terminals, concluding the proof.
\end{proof}

\begin{definition}\label{defnpartan}
An arc $\sigma :I \rightarrow {\mathbb R}^2$ has {parallel tangents} if there exist $t_1,t_2 \in I$, with $t_1<t_2$, such that $\sigma'(t_1)$ and $\sigma'(t_2)$ are parallel and pointing in opposite directions. 
\end{definition}

\begin{definition}\label{defncross} Let $L_1$ and $L_2$ be the lines $x=-r$ and $x=r$ respectively.  A line joining two points in $\sigma: [t_1,t_2] \rightarrow {\mathbb R}^2$ distant apart at least $2r$ one to the left of $L_1$ and the other to the right of $L_2$ is called a cross section, see Fig. \ref{endpartan} right. 
\end{definition}

Next we prove the existence of a cross section and parallel tangents as in Corollary 3.4 in \cite{papere}, but here using the existence of ends.

\begin{corollary} \label{crossect} A long arc has a cross section, parallel tangents and length of at least $2r$.
\end{corollary}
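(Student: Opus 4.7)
The plan is to extract from the long arc an end (Theorem \ref{maxterminal}) together with the half disk it encloses (Theorem \ref{halfdisk}), and then read off all three assertions from that picture. Fix coordinates so the end's terminals are $\sigma(\hat t_1)=(-r,0)$ and $\sigma(\hat t_2)=(r,0)$, and the half disk $H$ is the upper half of the radius-$r$ disk about the origin; then $L_1$ and $L_2$ are precisely the vertical lines through the two terminals. The length assertion is immediate, since by Theorem \ref{dfnlonglength} even the end alone has length at least $\pi r \geq 2r$.

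For the cross section I would use that the end cannot coincide with the semicircle bounding $H$, for that would make its terminals essential and contradict Definition \ref{end}. So the end must bulge outward past this semicircle, and since it meets $\partial H$ only at its two terminals, a straightforward continuity argument produces points on the end with $x<-r$ and with $x>r$ whose joining chord has length at least $2r$, providing the cross section. Under an inclusive reading of Definition \ref{defncross} the chord joining the two terminals themselves already serves.

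For parallel tangents I would reason via tangent rotation. A first-order analysis of the constraint that the end stays outside $H$ near each terminal forces $\sigma'(\hat t_1)=(a_1,b_1)$ with $a_1\leq 0$, $b_1\geq 0$ and $\sigma'(\hat t_2)=(a_2,b_2)$ with $a_2\leq 0$, $b_2\leq 0$; in particular the signed exterior corner angles $\alpha_1,\alpha_2$ of the simple closed curve formed by the end together with its chord both lie in $[-\pi/2,0]$. Applying the turning-tangent theorem to this simple closed curve (traversed so that the enclosed region sits on the right, giving total rotation $-2\pi$) and using that the straight chord contributes zero rotation, the tangent rotation $\theta_T$ along the end satisfies
\[
\theta_T \;=\; -2\pi-\alpha_1-\alpha_2 \;\in\; [-2\pi,-\pi],
\]
so $|\theta_T|\geq \pi$. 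By the intermediate value theorem applied to the continuously lifted tangent angle $\theta(t)$ along the end, there exist $t_1^*<t_2^*$ with $\theta(t_2^*)-\theta(t_1^*)=-\pi$, so $\sigma'(t_1^*)$ and $\sigma'(t_2^*)$ point in opposite directions and furnish the parallel tangents required by Definition \ref{defnpartan}.

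The main obstacle I anticipate is the turning-tangent computation: one has to verify that the end together with its chord is genuinely a simple closed $C^1$-with-corners curve (no self-intersection of the end is possible because it is an arc in the sense of Definition \ref{dfnarc1}, but this must be made precise), and that the exterior-angle inequalities derived from the first-order analysis at each terminal are correctly oriented with respect to the chosen traversal. Once those two points are in hand, the derivation of $\theta_T\in[-2\pi,-\pi]$ and the subsequent application of the intermediate value theorem are routine.
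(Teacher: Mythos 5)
Your length and cross-section claims land essentially where the paper does: the paper also takes the inclusive reading, observing that the two terminals of the end are distant exactly $2r$ and lie on $L_1$ and $L_2$, so the chord joining them is the cross section, and the length bound follows (the paper gets $2r$ from the cross section, you get it from $\pi r\geq 2r$; either is fine). Your strict-reading fallback --- that the end must contain points with $x<-r$ and with $x>r$ because it ``bulges past'' the semicircle --- is not actually justified (a point above $\hat C$ need not have $|x|>r$, and near a terminal the region outside $H$ contains points with $x>-r$, e.g.\ in the lower half-plane), but since the inclusive reading suffices this is harmless. For parallel tangents you take a genuinely different route: the paper draws the perpendiculars $L_1,L_2$ to the chord at the terminals, uses the minimality of the end to force the trajectory weakly to the left of $L_1$ after the first terminal and weakly to the right of $L_2$ before the second, and then applies the mean value theorem to the coordinate function; you instead close the end with its chord and apply the turning-tangent theorem. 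Your route has the advantage of directly producing \emph{antiparallel} tangent vectors (the paper's two vertical tangents could a priori point the same way, which the mean-value step glosses over), at the cost of having to verify that the end plus chord is a simple closed piecewise-$C^1$ curve.

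The one genuine gap is the source of your corner-angle bounds. You derive $a_1\leq 0$, $b_1\geq 0$ and $a_2\leq 0$, $b_2\leq 0$ from ``a first-order analysis of the constraint that the end stays outside $H$,'' but that constraint does not force any of these inequalities: a curve leaving $(-r,0)$ in the direction $(1,-\varepsilon)$ immediately enters the lower half-plane, which is entirely outside $H$, so staying outside $H$ permits $a_1>0$ and $b_1<0$. The correct source of $a_1\leq 0$ and $a_2\leq 0$ is the one the paper uses: if the trajectory just after $\hat t_1$ went strictly to the right of $L_1$, one would find a shorter sub-arc with essential terminals inside the open end, contradicting Definition \ref{end} and the minimality established in Theorem \ref{maxterminal}. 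Once that is repaired, note that your argument does not actually need the sign conditions on $b_1,b_2$ (which remain unjustified): with only $a_1,a_2\leq 0$ each exterior angle lies in $[-\pi/2,\pi/2]$, so $\alpha_1+\alpha_2\geq -\pi$ and still $\theta_T=-2\pi-\alpha_1-\alpha_2\leq -\pi$, which is all the intermediate value step requires. With that substitution your proof goes through.
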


\begin{proof} By Theorem \ref{maxterminal} a long arc admits and end $\sigma:[\hat t_1,\hat t_2] \to \mathbb R^2$. Since $||\sigma(\hat t_1)-\sigma(\hat t_2)||=2r$ the long arc admits a cross section. Therefore, the end has length at least $2r$ and so the long arc containing it. 

According the proof of Theorem \ref{maxterminal} the terminals of an end are antipodes in $\hat C\in \mathcal S$ and the end is of minimal length amongst all the choices for terminals in $C\in \mathcal S$. Let $L$ be the line joining the terminals and $L_1,L_2$ be the perpendicular lines to $L$ at each terminal. The maximality of $\hat C$ implies that locally the trajectory of the end after the first terminal must be to the left of $L_1$ (or coincides with $L_1$) and the trajectory of the end before the second terminal must be to the right of $L_2$ (or coincides with $L_2$). Otherwise, we find essential terminals, leading to a contradiction. 

If the trajectories coincide with the parallels, then there exist parallel tangents. If none of the trajectories coincides with the parallels, by continuity, the end intersects $L_1$ and $L_2$ at points other than the terminals, see Fig. \ref{endpartan} right. By the mean value theorem, there exist $t_1, t_2\in [\hat t_1,\hat t_2]$ such that $\sigma'(t_1)$ and $\sigma'(t_2)$ are parallel pointing in opposite directions. The case when only one trajectory coincides with a parallel runs similarly.
\end{proof}

\begin{observation}\label{ccomp}Let $\sigma:S^1\to \mathbb R^2$ be a $\kappa$-constrained loop. In Corollary \ref{2halfpestov} we proved that length$(\sigma) \geq 2\pi r$. In Theorem \ref{dfnlong} we proved that $\sigma$ admits a long arc.  In Theorem \ref{maxterminal} we proved that a long arc admits an end. Ends by definition are proper subsets of long arcs. In Theorem \ref{dfnlonglength} we proved that $\mbox{length(long arc)}\geq \pi r$. From Corollary \ref{crossect} we have that a long arc admits a cross section with $\mbox{length(cross section)}\geq 2r$. We derive the following:
$$\mbox{length($\sigma$})>\mbox{length(long arc)}>\mbox{length(end)}>\mbox{length(cross section)}.$$
\end{observation}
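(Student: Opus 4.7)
The plan is to present Observation \ref{ccomp} as a consolidation of the length estimates established earlier in the section, with each strict inequality justified by a proper containment between the objects being compared. First I would collect the four lower bounds verbatim from the preceding results: Corollary \ref{2halfpestov} gives $\mbox{length}(\sigma)\geq 2\pi r$; Theorem \ref{dfnlonglength} gives $\mbox{length}(\text{long arc})\geq \pi r$; an end exists by Theorem \ref{maxterminal} and has length at least $2r$ by Corollary \ref{crossect}; and a cross section is a line segment of length exactly $2r$ by Definition \ref{defncross} combined with the fact, from Theorem \ref{maxterminal}, that the terminals of an end are antipodes on a radius $r$ circle.

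Next I would establish the four strict inequalities. For $\mbox{length}(\sigma)>\mbox{length}(\text{long arc})$: a long arc $\sigma([t_1,t_2])$ has two distinct terminals spanned by a chord of length at most $2r$, whereas $\mbox{length}(\sigma)\geq 2\pi r>2r$, so the complementary sub-arc $\sigma(S^1\setminus[t_1,t_2])$ contributes positive length. For $\mbox{length}(\text{long arc})>\mbox{length}(\text{end})$: the remark following Corollary \ref{maxterminal2} records that, given an end $\sigma([\hat t_1,\hat t_2])$, there exists $\epsilon>0$ and a long arc $\sigma([\hat t_1-\epsilon,\hat t_2+\epsilon])$ that properly contains it, so choosing any such extension provides the strict gap. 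For $\mbox{length}(\text{end})>\mbox{length}(\text{cross section})$: we have $\mbox{length}(\text{end})\geq \pi r>2r=\mbox{length}(\text{cross section})$ since $\pi>2$.

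The main subtlety, and the part I would check most carefully, is the middle inequality, because an end is produced as a \emph{minimiser} in the proof of Theorem \ref{maxterminal} and therefore could in principle coincide with a long arc one has in hand. The resolution is precisely the extension argument in the remark after Corollary \ref{maxterminal2}: every end sits inside a one-parameter family of long arcs which strictly contains it, so the right comparison is not with the minimising long arc but with one of its $\epsilon$-enlargements. Beyond this point the observation is essentially bookkeeping over the earlier results.
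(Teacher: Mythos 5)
Your proposal is correct and follows essentially the same route as the paper: the observation is bookkeeping over Corollary \ref{2halfpestov}, Theorems \ref{dfnlong}, \ref{maxterminal}, \ref{dfnlonglength} and Corollary \ref{crossect}, with each strict inequality coming from a proper containment (loop $\supsetneq$ long arc $\supsetneq$ end) plus the numerical gap $\pi r > 2r$ for the last step. Your extra care on the middle inequality is the same point the paper compresses into ``ends by definition are proper subsets of long arcs'' --- equivalently, an end's terminals are exactly $2r$ apart while a long arc's are strictly less than $2r$ apart, so an end can never coincide with a long arc.
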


The next result provides a characterisation of convexity through curvature. A regular closed plane curve is convex if and only if it is the boundary of a convex set $\mathcal K\subset \mathbb R^2$ \cite{docarmo}. Next we assert on the convexity of  $\partial \mathcal K$ to conclude on the convexity of $\mathcal K$. 

\begin{figure}[h]
	\centering
	\includegraphics[width=.9\textwidth,angle=0]{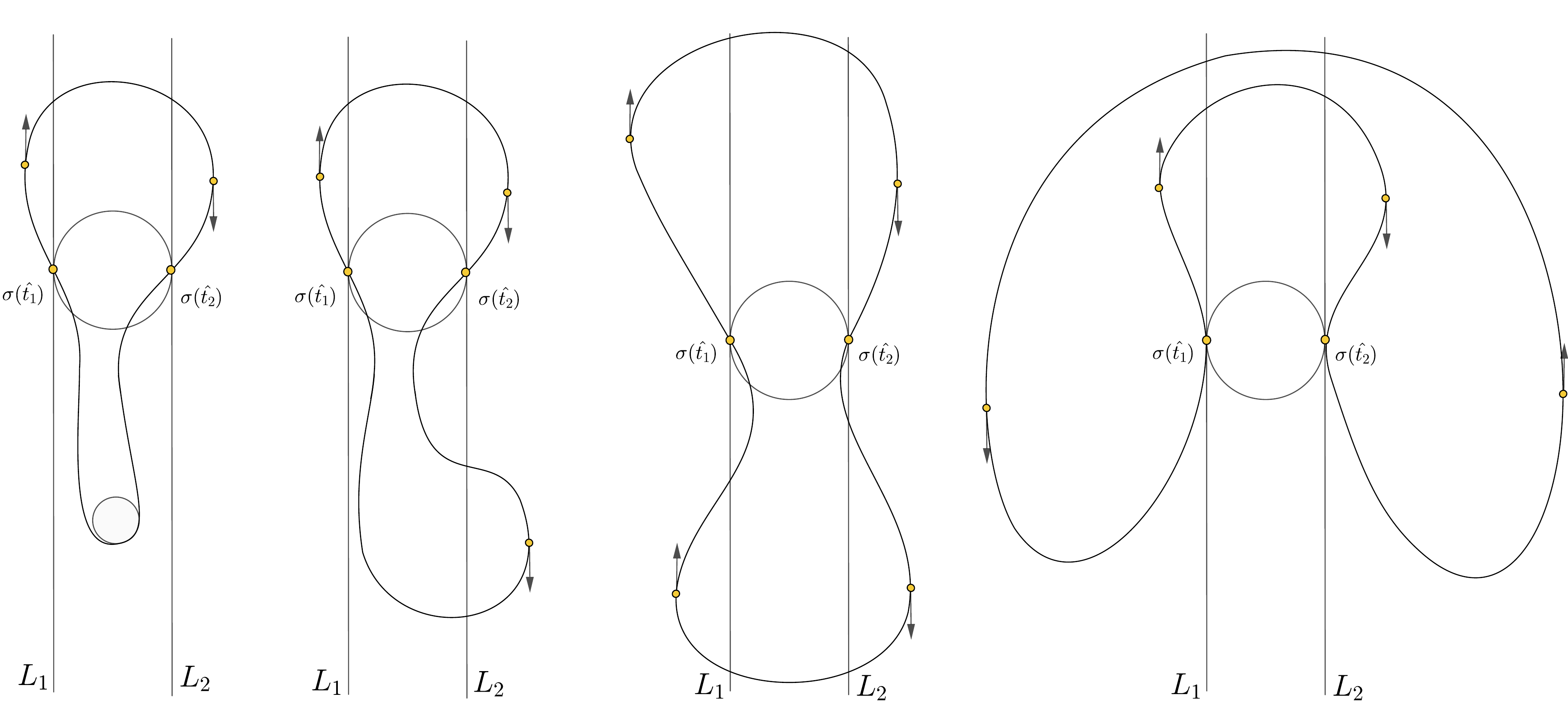}
\caption{ }
	\label{essnotconv}
\end{figure}

\begin{theorem}\label{nonconvex} If $\mathcal K\subset \mathbb R^2$ is a domain with boundary $\partial \mathcal K=\sigma$ a $\kappa$-constrained loop admitting a pair of essential terminals. Then, $\mathcal K$ is not convex. If $\mathcal K\subset \mathbb R^2$ is a convex domain with boundary $\partial \mathcal K=\sigma$ a $\kappa$-constrained loop. Then, $\partial \mathcal K=\sigma$ admits only inessential terminals.
\end{theorem}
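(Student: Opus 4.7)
The two assertions are contrapositives of each other, so it suffices to prove the first. Suppose for contradiction that $\mathcal K$ is convex and $\sigma = \partial\mathcal K$ admits essential terminals $p = \sigma(t_1)$ and $q = \sigma(t_2)$. By Definition \ref{dfnarc}, both complementary arcs from $p$ to $q$ are long, so in particular $\|p-q\| < 2r$.

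The plan is to combine Corollary \ref{crossect} with the rigid $2\pi$-turning of a convex smooth Jordan curve. Applying Theorem \ref{maxterminal} to each long arc produces an end, and the construction in the proof of Corollary \ref{crossect} then supplies, inside each end, a pair of parameters $s_1 < s_2$ lying in the parameter range of the long arc with $\sigma'(s_1) = -\sigma'(s_2)$.

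I would orient $\sigma$ clockwise, as in Section \ref{arc}, and let $\theta(t)$ be a continuous lift of the argument of $\sigma'(t)$. Convexity of $\mathcal K$ is equivalent to $\theta$ being monotone non-increasing around the loop with total decrease exactly $2\pi$. The anti-parallel condition, together with monotonicity and the fact that no sub-arc rotation can exceed $2\pi$, forces $\theta(s_1) - \theta(s_2) = \pi$ on each long arc; by monotonicity each long arc therefore carries rotation at least $\pi$, and since the two rotations sum to exactly $2\pi$ each equals $\pi$. This in turn forces $\theta$ to be constant on the sub-arcs $[t_1, s_1]$ and $[s_2, t_2]$ (so $\sigma$ is straight along $\vec v = \sigma'(t_1)$ on those sub-arcs) and pins down $\sigma'(t_1) = -\sigma'(t_2)$.

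To convert this geometric picture into the desired contradiction I would use the standard estimate that for a $C^1$ piecewise $C^2$ curve of curvature at most $1/r$ whose tangent rotates by $\pi$, the component of its chord perpendicular to the initial tangent direction has magnitude at least $2r$. Applied to the sub-arc from $\sigma(s_1)$ to $\sigma(s_2)$, combined with the observation that the two flanking straight pieces contribute only in the $\vec v$-direction, this forces the perpendicular component of $q - p$ to have magnitude at least $2r$, hence $\|p-q\| \geq 2r$, contradicting $\|p-q\| < 2r$. The delicate step I anticipate is this final one: the rotation count only yields the equality $\pi + \pi = 2\pi$, so the contradiction genuinely rests on the perpendicular chord estimate that converts the angular information on the end into a Euclidean lower bound for $\|p-q\|$.
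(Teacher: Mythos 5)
Your proof is correct, but it takes a genuinely different route from the paper's. Both arguments begin identically: essential terminals force both complementary arcs to be long with $\|p-q\|<2r$, and Corollary \ref{crossect} supplies an anti-parallel pair of tangents inside an end of each long arc. From there the paper stays qualitative: it cites Emch's fact that a convex closed curve has exactly two tangents parallel to any given direction, takes the two parallel tangents produced by one end, and then case-analyses where the complementary long arc travels relative to the strip between $L_1$ and $L_2$; Lemma \ref{r1} forces that arc to leave and re-enter the strip, and the mean value theorem manufactures a third (or fourth) parallel tangent, contradicting convexity. You instead invoke the monotone-turning characterisation of convexity together with the Umlaufsatz, pin the rotation of each long arc to exactly $\pi$ with straight flanking pieces, and convert the angular information into the Euclidean bound $\|p-q\|\geq 2r$ via the chord estimate $\int |\sin\theta|\,dt \geq r\int_0^{\pi}\sin u\,du = 2r$, contradicting $\|p-q\|<2r$. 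Your version buys a quantitative contradiction and dispenses entirely with Lemma \ref{r1} and the case analysis; the paper's version avoids integral estimates and turning-number machinery, working only with tangent-line counting. Two points you should make explicit to close the argument: first, the perpendicular chord estimate is false for non-monotone rotation, so you must say that it is the monotonicity of $\theta$ (coming from convexity) that licenses the substitution $dt \geq r\,|d\theta|$; second, Corollary \ref{crossect} produces the anti-parallel pair inside a long \emph{arc}, whereas the complementary curves determined by the essential terminals are a priori only long curves --- for a convex boundary they cannot spiral or zigzag, so they are arcs and the parameters $s_1<s_2$ genuinely lie in $[t_1,t_2]$, but this deserves a sentence.
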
 
\begin{proof}  
 
 Since a regular simple closed convex plane curve has always two and only two parallel tangents to any given direction \cite{emech} we contradict convexity by searching for a third one. We borrow the notation from Corollary \ref{crossect}. 

Consider a long arc with essential terminals. By Corollary \ref{crossect} this long arc admits two parallel tangents located somewhere at an end whose existence is guaranteed by Theorem \ref{maxterminal}. Next, we analyse the trajectory for the loop before and after the terminals of the end. Let $\sigma(\hat t_1-\epsilon)$ and $\sigma(\hat t_2+\epsilon)$ be points sufficiently close to the terminals of the end, $\epsilon>0$, see Fig. \ref{essnotconv}. We have some cases:

\begin{enumerate}
\item  $\sigma(\hat t_1-\epsilon), \sigma(\hat t_2+\epsilon)$ lie in between $L_1$ and $L_2$. 
\item  $\sigma(\hat t_1-\epsilon)$ lies to the right of $L_1$ and $ \sigma(\hat t_2+\epsilon)$ lies to the right of $L_2$. 
\item  $\sigma(\hat t_1-\epsilon)$ lies to the left of $L_1$ and $\sigma(\hat t_2+\epsilon)$ lies to the left of $L_2$.
\item   $\sigma(\hat t_1-\epsilon)$ lies to the left of $L_1$ and $ \sigma(\hat t_2+\epsilon)$ lies to the right of $L_2$.
\end{enumerate}

Proof of (1). Note these are the terminals of two long arcs from one of which we already obtained a pair of parallel tangents, see Fig. \ref{essnotconv} left. By Lemma \ref{r1} the second long arc cannot be entirely defined in the open band in between $L_1$ and $L_2$. Therefore, the loop must abandon the band and reenter it. Suppose the loop leaves and reenters the band intersecting $L_2$ twice, see second illustration in Fig. \ref{essnotconv}. By the mean value theorem, there exists another pair of parallel tangents implying that the loop has three parallels tangents, concluding that it cannot be the boundary of a convex set. The case where the loop leaves and reenters the band intersecting $L_1$ twice is analogous. If the loop intersects $L_1$ and $L_2$ we obtain four parallel tangents, so it cannot be the boundary of a convex set, see third illustration in Fig.\ref{essnotconv}.

The cases (2)-(3) run similarly. The case (4) is illustrated in Fig.\ref{essnotconv} right an also runs similarly. In general, any possible way of connecting the terminals $\sigma(\hat t_1-\epsilon), \sigma(\hat t_2+\epsilon)$ would lead to at least a third parallel tangent, concluding the proof.
\end{proof}

\begin{definition}
Consider the domain delimited by a long arc and the line segment connecting its terminals. If this is included in the bounded plane region separated by a $\kappa$-constrained loop we call it internal. If this is included in the unbounded region separated by a $\kappa$-constrained loop we call it external. Certain domains are neither internal nor external.
\end{definition}

The next result gives necessary and sufficient conditions for the existence of rolling disks for plane not necessarily convex domains whose boundary are $\kappa$-constrained curves.

\begin{observation}\label{obsintersect} If $\mathcal K$ is internal rolling, then $\partial D(t)\cap \partial \mathcal K$ with $D(t)\subset cl(\mathcal K)$ can only be a singleton, a pair of antipodal points in $D(t)$, or an arc in $D(t)$, see Definition \ref{dfnrolling}.
\end{observation}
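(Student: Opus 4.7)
The plan is to exploit $D(t)\subset cl(\mathcal K)$, which forces $\partial \mathcal K\cap int(D(t))=\emptyset$ since $int(D(t))\subset int(\mathcal K)$. I would first argue that, together with the $C^1$ regularity of $\partial \mathcal K$, this prevents transverse crossings: at every common point $p\in \partial D(t)\cap \partial \mathcal K$ the tangent lines of $\partial D(t)$ and $\partial \mathcal K$ must coincide, for otherwise a small subarc of $\partial \mathcal K$ would dip into $int(D(t))$, contradicting $\partial \mathcal K\cap int(\mathcal K)=\emptyset$.

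The remaining task is to show that any two non-antipodal intersection points force an entire arc of $\partial D(t)$ to lie in $\partial \mathcal K$. Suppose $p,q\in \partial D(t)\cap \partial \mathcal K$ with $\|p-q\|<2r$. Write $D_1:=D(t)$, let $D_2$ be the other radius-$r$ disk through $p,q$, and let $\alpha$ denote the short arc of $\partial D_1$ from $p$ to $q$, so that $\alpha\subset D_1\cap D_2$. Let $\beta$ be the subarc of $\partial \mathcal K$ joining $p$ to $q$ whose tangent at $p$ points along $\alpha$; then $\beta$ is $\kappa$-constrained with terminals on $\partial D_1\cap \partial D_2$ and avoids $int(D_1)$. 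By the classification recalled in Observation \ref{connect}, $\beta$ is either short or long. In the short case, $\beta\subset D_1\cap D_2$, so $\beta\subset \partial D_1\cap D_2=\alpha$ and hence $\beta=\alpha$, placing an entire arc of $\partial D(t)$ in the intersection and realising the third case of the observation. The long case I would exclude: the rigidity from Observation \ref{connect} shows that a long $\beta$ entirely in $D_1\cup D_2$ would already satisfy $\beta\subset cl(D_1\cap D_2)$ and hence reduce to $\alpha$, so a long $\beta$ must exit $D_1\cup D_2$; but a u-turn obstruction in the spirit of Lemma \ref{r1}, applied to a strip of width $2r$ adapted to the chord $\overline{pq}$, should prevent the curve from leaving $D_1\cup D_2$ and returning to $q$ with the prescribed tangent. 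The observation then follows from this pair-to-arc implication: either the intersection is a singleton; or every pair of its points is antipodal on $\partial D(t)$, which by elementary circle geometry leaves exactly a pair; or some pair is non-antipodal and the previous step yields an arc of $\partial D(t)$ inside $\partial \mathcal K$.

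The step I expect to be the main obstacle is the exclusion of the long case for $\beta$: the curve must simultaneously respect tangency to $\partial D_1$ at both $p$ and $q$, avoid $int(D_1)$, and loop outside $D_1\cup D_2$, and proving these constraints jointly incompatible under the $\kappa$-bound is the delicate geometric point. Rotating and translating so that $\overline{pq}$ is horizontal and $\alpha$ lies above, Lemma \ref{r1} should apply cleanly to the resulting strip of width $2r$ to deliver the contradiction.
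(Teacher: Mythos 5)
The paper records this statement as an observation with no proof, so there is nothing on the paper's side to compare against; judging your attempt on its own, the first half is fine (the interior of $D(t)$ misses $\partial \mathcal K$, hence every common point is a point of tangency, and in the short case $\beta\subset D_1\cap D_2$ together with $\beta\cap int(D_1)=\emptyset$ does force $\beta=\alpha$), but the step you flag as delicate --- excluding the long case --- is not merely delicate: it is false under the hypotheses you actually use. Throughout, you invoke only $D(t)\subset cl(\mathcal K)$ for the single disk, tangency, and the curvature bound, never the internal rolling property at any other boundary point. Here is a configuration satisfying everything you use in which the conclusion fails: let $\partial\mathcal K$ consist of two short disjoint subarcs of $\partial D_1$ (say near the top and near the bottom of the circle, not symmetric, so that they contain non--antipodal pairs), joined by two large outward bulges, each bulge leaving $\partial D_1$ tangentially at a junction with curvature jumping to a value strictly less than $1/r$ toward the centre (piecewise $C^2$ permits this). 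Then $D_1\subset cl(\mathcal K)$, the curve is $\kappa$-constrained, and $\partial D_1\cap\partial\mathcal K$ is two disjoint arcs --- neither a singleton, nor an antipodal pair, nor a single arc. For a pair $p,q$ taken one from each arc, your $\beta$ runs along one bulge and is long, so the long case genuinely occurs. What saves the observation is that this $\mathcal K$ is not internal rolling: at a boundary point just past a junction, the unique inward tangent disk is (to first order) $D_1$ translated tangentially, and the resulting crescent protrudes across the opposite retained arc of $\partial D_1$ into $\mathcal K^c$. Your argument never looks at those points, so it cannot detect the obstruction; any correct proof must use the rolling hypothesis globally, not only at $\sigma(t)$.

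Two of the specific tools you propose for the long case also do not do what you claim. Observation \ref{connect} only forbids points of $int(D_1\cup D_2)\setminus cl(D_1\cap D_2)$; the major arc of $\partial D_1$ is a long curve lying in $D_1\cup D_2$ that is not contained in $cl(D_1\cap D_2)$, so a long $\beta$ inside $D_1\cup D_2$ need not ``reduce to $\alpha$'' (in that particular situation $\beta$ would at least lie on $\partial D_1$, which is harmless, but the inference as you state it is wrong). More importantly, Lemma \ref{r1} constrains only curves that remain inside the band $\mathcal B$ of width $2r$ with $y\geq 0$; a long $\beta$ is free to exit the band through the lines $x=\pm r$ (this is exactly what the bulge in the example above does), and then the lemma gives no contradiction. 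A final, smaller point: even granting the pair-to-arc implication, concluding that the intersection \emph{is} an arc (rather than merely \emph{contains} one) needs the short extra argument that a closed subset of the circle which is stable under joining non-antipodal pairs by minor arcs must be a singleton, an antipodal pair, or a single closed arc.
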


\begin{theorem} \label{main} Let $\mathcal K\subset \mathbb R^2$ be a domain with boundary $\partial \mathcal K=\sigma$ a $\kappa$-constrained loop. Then,
\begin{enumerate}
\item $ \mathcal K$ is internal rolling if and only if $\partial \mathcal K$ does not admit a pair of essential terminals for complementary internal long arcs.
\item $\mathcal K$ is external rolling if and only if $\partial \mathcal K$ does not admit a pair of essential terminals for complementary external long arcs one bounded and the other unbounded.
\item $\mathcal K$ is rolling if and only if $ \mathcal K$ admits only inessential terminals. Here $\mathcal K$ is not necessarily convex.
\end{enumerate}
\end{theorem}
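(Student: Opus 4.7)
The plan is to treat the three equivalences in turn, leaning on Theorem~\ref{halfdisk} as the core inclusion principle. The key observation is that a pair of essential terminals $\sigma(t_1),\sigma(t_2)$ automatically splits the loop into two long arcs, each of which (by Theorem~\ref{halfdisk}) delimits a region containing a half-disk of radius $r$. The position of the chord $\sigma(t_1)\sigma(t_2)$ relative to $\mathcal K$ then dictates whether both arcs are internal (chord in $cl(\mathcal K)$), both external (chord in $cl(\mathcal K^c)$, producing one bounded and one unbounded arc-chord region), or of mixed type.

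For (1), I would prove the forward direction by contradiction: assume $\mathcal K$ is internal rolling but admits essential terminals $\sigma(t_1),\sigma(t_2)$ with both complementary arcs internal. Theorem~\ref{halfdisk} then places a half-disk of radius $r$ on each side of the chord inside $cl(\mathcal K)$. Passing to the ends furnished by Theorem~\ref{maxterminal} with terminals at distance exactly $2r$, I examine the behaviour of the internal tangent disk $D(t)$ as $t$ traverses one end; Observation~\ref{obsintersect} restricts its intersection with $\partial \mathcal K$ to a singleton, an antipodal pair, or an arc, and along the end the presence of the opposite half-disk on the other side of the chord forces $D(t)$ to cross $\partial\mathcal K$ transversally at some intermediate time, contradicting internal rolling. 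For the backward direction I reason contrapositively: if internal rolling fails at some $\sigma(t_0)$, the would-be internal tangent disk must contact $\partial\mathcal K$ at a second point $\sigma(t_0')$ with $\|\sigma(t_0)-\sigma(t_0')\| \leq 2r$. Observation~\ref{connect} together with Lemma~\ref{r1} forces the two complementary arcs between $\sigma(t_0)$ and $\sigma(t_0')$ to be long, and an extremal choice of the second contact point turns $\sigma(t_0),\sigma(t_0')$ into a genuinely essential pair whose complementary arcs are both internal.

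The proof of (2) runs in parallel, replacing the internal tangent disk by the external one and $\mathcal K$ by $cl(\mathcal K^c)$. The qualifier ``one bounded and the other unbounded'' is not an extra hypothesis but rather a geometric consequence: when both complementary arcs are external, the chord lies in the unbounded component of $\mathbb R^2 \setminus \sigma$, and the two arc-chord regions then partition a neighbourhood of the chord into a bounded pocket and the unbounded region at infinity. Finally, (3) follows from (1) and (2) by case analysis on the chord of any hypothetical essential pair: if the chord lies in $cl(\mathcal K)$ we invoke (1); if it lies in $cl(\mathcal K^c)$ we invoke (2); and if the chord crosses $\partial\mathcal K$, the intersection points together with the original terminals produce essential sub-pairs whose chords do lie entirely in $cl(\mathcal K)$ or $cl(\mathcal K^c)$, reducing this case to the previous two.

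The main obstacle I anticipate is the backward direction of (1) and (2): turning a local failure of the rolling property into a globally defined pair of essential terminals. A single obstruction of the tangent disk only supplies two boundary contact points at distance at most $2r$, and they are not automatically essential. Promoting them to essential terminals for complementary internal (respectively external) long arcs requires Observation~\ref{connect} to exclude the existence of a short complementary arc between them, combined with an extremal choice of the second contact point that forces the correct configuration of the chord relative to $\mathcal K$.
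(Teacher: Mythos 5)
Your overall strategy coincides with the paper's: in both, the crux is the backward direction, where a failure of the rolling property yields a tangent disk $D(t)$ meeting $\partial\mathcal K$ at an offending second point $\sigma(t^*)$, and Lemma \ref{r1} forces the arc $\sigma([t,t^*])$ to be long, whence the pair is declared essential. The paper's version of this step is your sketch minus the extremal choice: it invokes Observation \ref{obsintersect} to produce $\sigma(t^*)$ and then simply asserts that $\sigma(t),\sigma(t^*)$ are essential and of the right (internal, resp.\ external) type, so the difficulty you flag --- showing the \emph{other} complementary arc is also long, and that both arc--chord regions sit on the correct side of $\partial\mathcal K$ --- is genuine and is not resolved in the paper either; your proposed fix via an extremal second contact point is a reasonable way to close it. Where you diverge: for the forward direction of (1) the paper argues in one line that, the essential terminals being at distance less than $2r$ with both complementary long arcs internal, no radius-$r$ disk tangent at such a terminal can lie in $cl(\mathcal K)$, whereas you route through Theorem \ref{halfdisk} and the ends of Theorem \ref{maxterminal}; this is heavier machinery but makes the obstruction explicit. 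For (3) the paper says only ``combine (1) and (2)''; your case analysis on the position of the chord (in $cl(\mathcal K)$, in $cl(\mathcal K^c)$, or crossing $\partial\mathcal K$) addresses the real gap that essential pairs of mixed type are covered by neither (1) nor (2) as literally stated, though your reduction of the crossing case to essential sub-pairs would itself need an argument that the sub-pairs are essential rather than merely terminals of one long arc.
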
 

\begin{proof} (1) Suppose $\mathcal K$ is internal rolling. Them, for each $\sigma(t)\in \partial \mathcal K$  there exists a radius $r$ disk $D(t)\subset cl(\mathcal K)$. Suppose $\partial \mathcal K$ admits a pair of essential terminals for complementary internal long arcs. Since the distance between essential terminals is less than $2r$ we have that no disk tangent to an essential terminal can be included in $cl(\mathcal K)$ leading to a contradiction. 

Conversely, suppose $\partial \mathcal K$ does not admit a pair of essential terminals for complementary internal long arcs. If $ \mathcal K$ is not internal rolling, then there exists a radius $r$ disk $D(t)$ tangent to $\partial \mathcal K$ at $\sigma(t)$ such that $\partial D(t)\cap \partial \mathcal K$ have an element different from the ones in Observation \ref{obsintersect}. Let $\sigma(t^*)$ be such an element. Since $D(t)$ is tangent at $\sigma(t)$ Lemma \ref{r1} implies that the arc $\sigma:[t,t^*]\to \mathbb R^2$ as a point above $\partial D(t)$ implying this arc is long, concluding that $\sigma(t),\sigma(t^*)$ are essential terminals, leading to a contradiction.

(2) runs similarly as (1) after replacing internal by external. The existence of an unbounded long arc comes from the non compactness of $\mathbb R^2$. In Fig. \ref{exroll} left we show a non external rolling domain admitting a pair of essential terminals, in this case, the line joining the terminals decompose the plane into one external region homeomorphic to a disk and one unbounded region. Note that by adding a point at the infinite to $\mathbb R^2$ the unbounded long arc becomes bounded. 

(3) Since $\mathcal K$ is rolling if and only if it is internal and external rolling the proof follows by combining (1) and (2). Note that sets admitting only inessential terminals are not necessarily convex.
\end{proof}

We present an updated version for the Blaschke rolling disk theorem \cite{blaschke} for $C^1$ and piecewise $C^2$ simple closed curves satisfying a bound on absolute curvature.

\begin{theorem} Suppose that a convex domain $\mathcal K\subset \mathbb R^2$ has boundary $\partial \mathcal K=\sigma$ a $\kappa$-constrained loop. Then $\mathcal K$ is rolling.
\end{theorem}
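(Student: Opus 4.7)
The plan is to derive this statement directly from two previously established results: the second assertion of Theorem \ref{nonconvex} and the reverse direction of Theorem \ref{main}(3). Together, these reduce the convex rolling claim to a short transitivity argument running through the essential/inessential dichotomy of Definition \ref{dfnarc}.

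First I would apply Theorem \ref{nonconvex}: convexity of $\mathcal K$, together with the $\kappa$-constrained hypothesis on $\partial\mathcal K=\sigma$, forces $\sigma$ to admit only inessential terminals. The underlying mechanism in that proof is a parallel-tangents argument leveraging Lemma \ref{r1} to exhibit a third tangent parallel to some fixed direction whenever a pair of essential terminals is present, contradicting the classical fact that a convex simple closed plane curve has exactly two parallel tangents in every direction.

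Next I would feed this conclusion into Theorem \ref{main}(3). The absence of any essential terminals in $\sigma$ is, a fortiori, the absence of essential terminals for complementary internal long arcs and for complementary external long arcs, so the conditions of both (1) and (2) of Theorem \ref{main} hold. Hence $\mathcal K$ is simultaneously internal and external rolling, which is exactly the rolling property of Definition \ref{dfnrolling}.

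The main obstacle is essentially absent: all of the real geometric content has already been absorbed into Theorems \ref{nonconvex} and \ref{main}. The only point requiring a moment of care is the bookkeeping observation that the classification of terminals in Definition \ref{dfnarc} is intrinsic to the loop $\sigma$, so the conclusion of Theorem \ref{nonconvex} uniformly covers the hypotheses of both (1) and (2) of Theorem \ref{main}. As a bonus, this argument strengthens the classical Blaschke statement by producing an external tangent disk of radius $r$ at every boundary point, not merely an internal one.
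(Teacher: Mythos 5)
Your proposal is correct and follows exactly the paper's own argument: invoke Theorem \ref{nonconvex} to conclude that $\sigma$ admits only inessential terminals, then apply part (3) of Theorem \ref{main} to obtain that $\mathcal K$ is rolling. The additional bookkeeping remark about covering the hypotheses of parts (1) and (2) simultaneously is a harmless elaboration of the same route.
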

\begin{proof} By Theorem \ref{nonconvex} a convex domain with boundary a $\kappa$-constrained loop  admits only inessential terminals. The result follows by (3) in Theorem \ref{main}.
\end{proof}

\section{Rolling disk decomposition} \label{algorithm}

In Theorem \ref{main} we established that a domain $\mathcal K\subset \mathbb R^2$ is rolling if and only if $\partial \mathcal K$ does not admit essential terminals. Next, we propose a method for obtaining a finite collection of maximal (internal or external) rolling regions for a domain that is not necessarily internal or external rolling.


\begin{theorem}\label{dcp} Suppose a domain $\mathcal K\subset \mathbb R^2$ has boundary $\partial \mathcal K=\sigma$ a $\kappa$-constrained loop. Then there exists a decomposition of $\mathbb R^2 \setminus \partial \mathcal K$ into a finite number of maximal (internal or external) rolling regions and certain excluded regions.
\end{theorem}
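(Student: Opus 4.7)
The plan is to use the essential ends of $\partial\mathcal K$ as cut-data to surgically decompose the plane. By Theorem \ref{ends} the loop $\partial\mathcal K$ has only finitely many essential ends $e_1,\dots,e_n$, and by Theorem \ref{maxterminal} each $e_i$ has terminals $p_i,q_i$ with $\|p_i-q_i\|=2r$. Let $s_i$ denote the closed segment $[p_i,q_i]$; by Theorem \ref{halfdisk} each $e_i$ together with $s_i$ bounds a region containing a closed half disk of radius $r$. These segments are the natural cuts that isolate the obstruction to rolling.

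First I would form the 1-complex $\Gamma=\partial\mathcal K\cup s_1\cup\cdots\cup s_n$. After subdividing at the (finitely many) intersection points between chords and between chords and $\partial\mathcal K$, the complement $\mathbb R^2\setminus\Gamma$ decomposes into finitely many open connected regions $R_1,\dots,R_m$. Each $\partial R_j$ is a finite union of arcs of $\partial\mathcal K$ and sub-segments of chords.

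Next I would classify the regions. Call $R_j$ an excluded region if its boundary is built from a single long essential arc together with its chord, i.e.\ a half-disk type region supporting an essential end. For every other region $R_j$, I claim that the $\partial\mathcal K$ portion of $\partial R_j$ consists solely of inessential pieces, since every essential end of $\partial\mathcal K$ has been surgically separated off by some chord $s_i$. A radius $r$ disk encounters no obstruction to rolling along a straight chord, so viewing $R_j$ from its enclosing side and applying Theorem \ref{main} (with the bounding curve read as a concatenation of inessential arcs and straight segments) certifies that each non-excluded $R_j$ is either internal or external rolling. Maximality follows because fusing two adjacent rolling regions across a chord $s_i$ would restore the essential terminals $p_i,q_i$ and so, by the necessity direction of Theorem \ref{main}, destroy the rolling property.

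The main obstacle will be to control the combinatorics of the chords. Different segments $s_i$ may cross one another, may meet $\partial\mathcal K$ at interior points other than their own terminals, and several essential ends can share a terminal in degenerate configurations. Showing that the resulting decomposition stays finite and that each piece genuinely inherits the rolling property of Theorem \ref{main} requires a careful case analysis on the mutual positions of the essential ends, together with a verification that the hypotheses of Theorem \ref{main}, which are formulated for purely $\kappa$-constrained loops, admit a workable analogue when part of the bounding curve is a straight chord. This case analysis, together with the finiteness statement of Theorem \ref{ends}, is the real content of the algorithm.
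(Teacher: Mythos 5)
Your overall strategy---cut the plane along chords attached to the finitely many essential ends and then classify the complementary regions---is genuinely different from the paper's, which instead runs a dynamic process: it rolls a radius $r$ disk inside $\mathcal K$ (seeded by Pestov--Ionin), replaces each long arc it gets stuck on by a \emph{circular} arc $\lambda_j$ of the obstructing disk $D_j$ (so the modified boundary stays $\kappa$-constrained, sidestepping the ``straight chord'' issue you flag), and then recurses into the replaced long arcs, isolating thin ``necks'' as the excluded regions. Unfortunately your static classification of the resulting regions is not just incomplete but inverted, and this is a genuine gap rather than a technicality.

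Concretely, consider a dumbbell: two round bulbs joined by a thin neck of width less than $2r$. The essential ends are the two caps of the bulbs, so your complex $\Gamma$ cuts the domain into two cap regions (each bounded by an end and its chord $s_i$) and one middle region bounded by the two chords and the two long parallel sides of the neck. You declare the cap regions ``excluded,'' yet by Theorem \ref{halfdisk} each contains a closed half disk of radius $r$ and is exactly where the maximal internal rolling regions live. Meanwhile the middle region, which you would certify as rolling, is too thin to contain any radius $r$ disk, and its $\partial\mathcal K$-portion is \emph{not} made of inessential pieces: a point on each side of the neck at distance less than $2r$ gives a pair of essential terminals, since both complementary arcs wrap around a bulb and are long. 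So the claim that ``every essential end has been surgically separated off, hence the remaining boundary pieces are inessential'' is false---essential terminals need not lie inside any end, and cutting at the ends does not remove them. This is precisely the phenomenon the paper's step (2) (``Necks'') is designed to detect, by sweeping circles along each long arc to locate a second pair of essential terminals $\sigma(t_{3j}),\sigma(t_{4j})$ near the end and exhibiting the strip between the two resulting parallel subarcs as the excluded region, while the cap beyond it is fed back into the rolling algorithm. To repair your argument you would need to replace the binary classification by such a secondary analysis inside each end-plus-chord region, and you would also need to revisit maximality, since a chord $s_i$ can cut straight through territory a radius $r$ disk can traverse.
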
 

\begin{proof}  Suppose $\partial \mathcal K=\sigma$ is a loop with only inessential terminals. Then, Theorem \ref{main} implies that $\mathcal K$ is rolling. The decomposition of $\mathbb R^2 \setminus \partial \mathcal K$ consists exactly on the two maximal rolling domains (with respect set inclusion) separated by the loop.

Suppose $\partial \mathcal K=\sigma$ is a loop admitting a pair of essential terminals. 

\begin{enumerate}

\item {\bf Rolling algorithm}: We proceed to construct a domain such that for each element on its boundary there exist disk of radius $r={1}/{\kappa}$ tangent to the boundary included in the domain.

By the Pestov-Ionin theorem \cite{pestov} the bounded region delimited by the loop encloses a radius $r$ disk $D_0$ that can be chosen to be tangent to $\partial \mathcal K$. With Definition \ref{dfnrolling} in mind, proceed to roll clockwise $D_0$ until it first intersects the loop at a point other than the tangent point at $\partial \mathcal K$. If the first intersecting point is one of the types described in Observation \ref{obsintersect}, we continue the rolling process until we obtain a pair of essential terminals tangent to the boundary of a rolling disk, see Fig. \ref{rollalg}. Let $D_1$ be the rolling disk containing such essential terminals on its boundary and let $\lambda_1$ be the shortest arc in $\partial D_1$ connecting these terminals.  Replace one of the two long arcs whose essential terminals are the ones detected by the arc $D_1$. The long arc replaced is the one having a point above $\lambda_1$, see Lemma \ref{r1}. 

We continue this rolling and replacing process until cyclically we return to the starting replacement $\lambda_1=\lambda_{k+1}$. Since each of the detected essential terminals characterise a unique long arc and that loops are rectifiable, the replacing process must be finite. Let $\mathcal R_1$ be the domain enclosed by the cyclic concatenation of the portions of $\partial \mathcal K$ rolled by $\{D_{j}\}_{j=1}^k$ together with the replacing arcs $\{\lambda_{j}\}_{j=1}^k$ see Fig. \ref{rollalg} right. Clearly, $\mathcal R_1$ is a maximal domain with respect set inclusion. Also, the piecewise smooth curve traced by the center of the rolling disks uniquely characterises $\mathcal R_1$.

\begin{figure}[h]
	\centering
	\includegraphics[width=1\textwidth,angle=0]{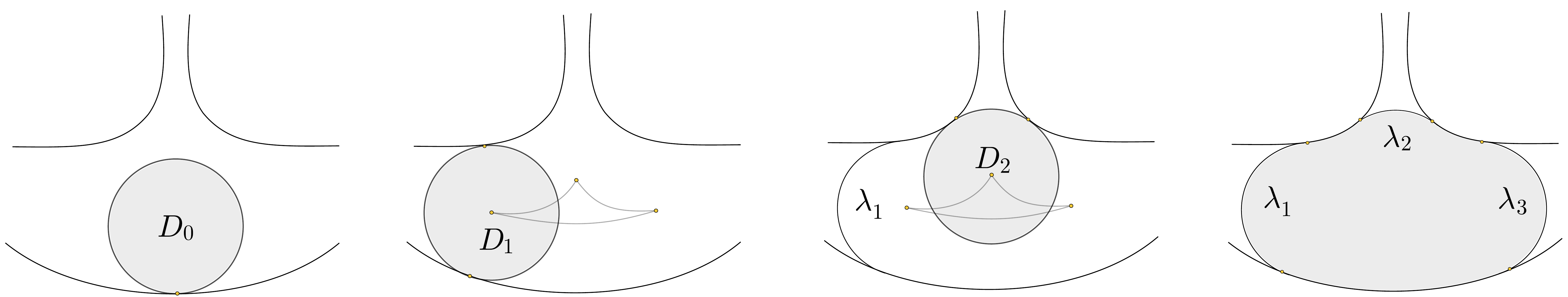}
\caption{An example of the rolling algorithm for $k=3$. The grey region depicts $\mathcal R_1$. }
	\label{rollalg}
\end{figure}

\item  {\bf Necks}: In the previous step we exchanged $k$ long arcs by $k$ (replacing) arcs. Now, we concentrate on each of the replaced long arcs. Consider the rolling disk $D_j$ containing the replacing arc $\lambda_j$ whose terminals $\sigma(t_{1j}),\sigma(t_{2j})$ are also the (essential) terminals of the replaced long arc, see Fig. \ref{rollalg1} left. 


By continuously sweeping  along the long arc $\sigma: [t_{1j},t_{2j}]\to \mathbb R^2$ with a family of circles $C\in \mathcal S$ (similarly as in Theorem \ref{maxterminal}) we determine the existence of an end, see Corollary \ref{maxterminal2}. Let $D_j^0$ be a radius $r$ disk tangent to one of the terminals of the end, see Fig. \ref{rollalg1}. The case where $D_j^0$ intersects $\partial \mathcal K=\sigma$ transversally is treated in (3). Roll $D_j^0$ towards the line connecting the terminals of the end to obtain a pair of essential terminals. Let $D_j^1$ be the radius $r$ disk containing on its boundary such terminals denoted by $\sigma(t_{3j}),\sigma(t_{4j})$.

\begin{figure}[h]
	\centering
	\includegraphics[width=1\textwidth,angle=0]{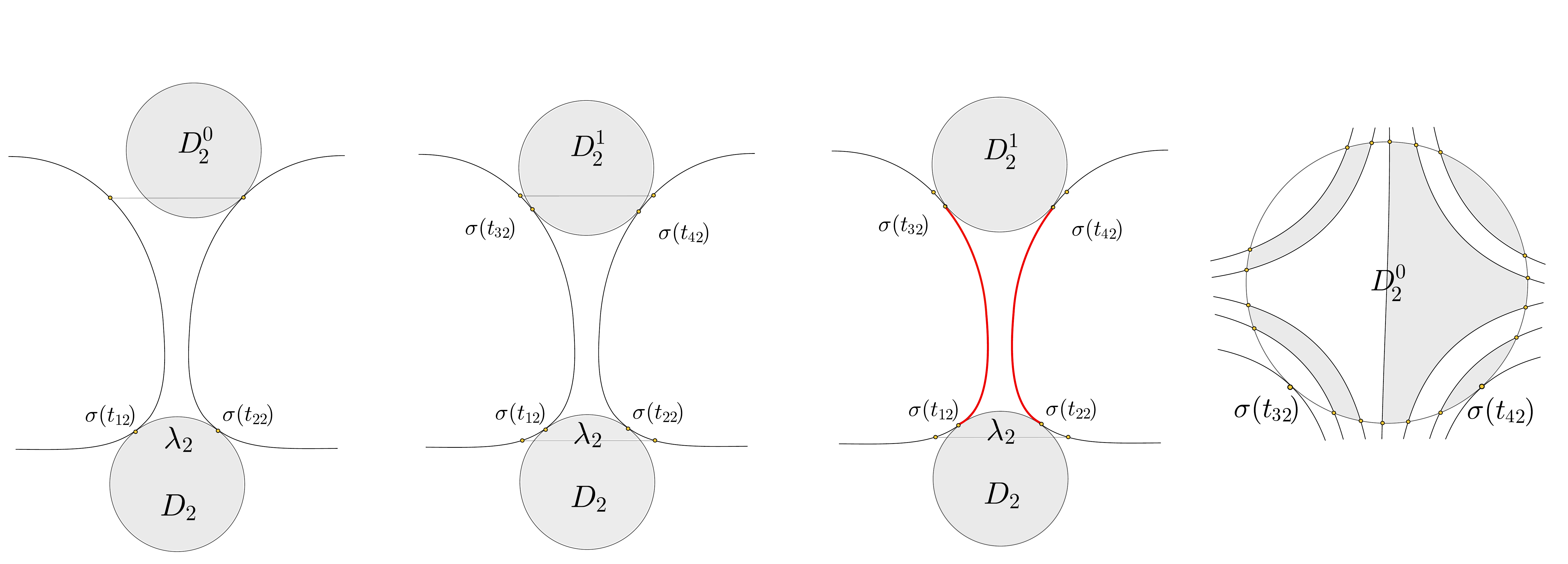}
\caption{An example of the process described in (2) for $j=2$ according to Fig. \ref{rollalg}. The coloured  `parallel arcs' depict a neck. The loop intersects transversally $\partial D_2^0$ at several points. }
	\label{rollalg1}
\end{figure}

Due to the continuous circle sweeping each of the arcs $\sigma: [t_{1j},t_{3j}]\to \mathbb R^2$ and $\sigma: [t_{4j},t_{2j}]\to \mathbb R^2$ may be short or simple but never long. Let a neck be conformed by the two `parallel arcs' in a loop obtained as above. In this case, we obtained the neck $\sigma([t_{1j},t_{3j}]) \cup \sigma([t_{4j},t_{2j}])$, see third illustration in Fig. \ref{rollalg1}. Note that a neck is a set of `parallel essential terminals'. The regions enclosed by a neck and their respective replacement arcs are clearly (excluded) not rolling domain.

%

\item {\bf Disk intersections}: If $D_j^0$ intersects transversally the loop, then $D_j^0$ encloses a portion on a (excluded) not rolling domain. In this case, we consider pairs of elements in $\partial D_j^0 \cap \partial \mathcal K$ and apply to them the turning condition in Definition \ref{dfnarc} to determine arc type, see Fig. \ref{rollalg1} right. Select the essential terminals in $\partial D_j^0 \cap \partial \mathcal K$ and proceed to apply the procedure in (2) to determine their associated necks. Note some excluded regions may have as part of its boundary a replacement arc.

If $D_j^1$ only intersects the loop according to Observation \ref {obsintersect} and at the detected essential terminals. Then, we apply the rolling algorithm in (1) to obtain another rolling domain.

By recursively applying this process we obtain a decomposition of $\mathbb R^2 \setminus \partial \mathcal K$ into a finite number of excluded regions and a finite number of rolling regions. It is easy to see that this decomposition is unique and maximal with respect set inclusion, concluding the proof.
\end{enumerate}
\end{proof}

%



\begin{thebibliography}{20}
\baselineskip=20 true pt
\maketitle \baselineskip=1.15\normalbaselineskip

\bibitem{papere} J. Ayala, On the topology of the spaces of curvature constrained plane curves. Advances in Geometry, Vol 17, No 3, pp.283-292 (2017).      

 \bibitem{blaschke} W. Blaschke, Kreis und Kugel, Zweiteedn. Walter de Gruyter AG, Berlin (1956).

\bibitem{docarmo} M. P. do Carmo, Differential Geometry of Curves and Surfaces, Prentice-Hall, Saddle River NJ (1976).

 \bibitem{emech}  A. Emch, Properties of Closed Convex Curves in a Plane, American Journal of Mathematics, Vol. 35, No. 4, pp. 407-412 (1913).
 
 \bibitem{pestov} G. Pestov, V. Ionin, On the Largest Possible Circle Imbedded in a Given Closed Curve, {Dok. Akad. Nauk SSSR, Vol 127, pp 1170-1172 (1959)}, in Russian.
 
 \end{thebibliography}
 \end{document}